\newcommand{\href}[2]{{#2}}
 \newcommand{\Ts}{\textstyle}
\newcommand{\Ss}{\scriptstyle}  \newcommand{\SSs}{\scriptscriptstyle}
\newcommand{\ssr}{\rm\scriptscriptstyle}
\newcommand{\req}{\relax}
\newcommand{\rfia}[1]{\makebox[\parindent][l]{%
                     \makebox[0em][r]{\rm(}\sf#1\rm)}}
\newcounter{ABCcB}
\newcommand{\theABCcC}{\alph{ABCcB}}
\newcommand{\Ew}{\mathop{\rm {{}E{}}}\nolimits} 
\newcommand{\sign}{\mathop{\rm sign}\nolimits}    
\newcommand{\ve}{\varepsilon}
\newcommand{\Lo}{\mathop{\rm {{}o{}}}\nolimits}
\newcommand{\LO}{\mathop{\rm {{}O{}}}\nolimits}
\newcommand{\B}{\mathbb B}
\newcommand{\R}{\mathbb R}
\newcommand{\N}{\mathbb N}
\newcommand{\Jc}{\mathop{\bf\rm{{}I{}}}\nolimits}
\newcommand{\EM} {{\mathbb I}}
\newcommand{\iid}{\mathrel{\stackrel{\ssr i.i.d.}{\sim}}}
\newcommand{\Tfrac}[2]{\textstyle\frac{#1}{#2}}
\newcommand{\Tint}{\mathop{\Ts\int}\nolimits}
\newcommand{\Tsum}{\mathop{\Ts\sum}\nolimits}
\newlength{\SyW}   \newlength{\msu}  \msu=\mathsurround 
\newcommand{\wto}{\mathrel{\mathsurround0em \mbox{$\longrightarrow$}%
  \llap{\settowidth{\SyW}{$\longrightarrow$}
  \raisebox{-.15ex}{\makebox[\SyW]{\scriptsize\rm w}}}}}
\newtheorem{Thm}{Theorem}
\newtheorem{Prop}[Thm]{Proposition}
\newtheorem{Lem}[Thm]{Lemma}
\newtheorem{Rem}[Thm]{Remark}
\newtheorem{Cor}[Thm]{Corollary}
\newtheorem{Def}[Thm]{Definition}
\numberwithin{equation}{section}
\numberwithin{Thm}{section}
\newcounter{ABCc}
\renewcommand{\theABCc}{\alph{ABCc}}
\newenvironment{ABC}{\begin{list}{
  \rfia{\theABCc}}{\usecounter{ABCc} \topsep 0ex \partopsep 0ex \itemsep0ex
  \parsep=\parskip \leftmargin 0em \rightmargin 0em \itemindent=\parindent
  \listparindent=\parindent  \labelsep 0.5em \labelwidth 0.5em }}{\end{list}}
\newcommand{\ReprintofEdition}[1]{Reprint of the #1 edition.}
 \journalname{}
\begin{document}

\ifx\blinded\undefined
\title{Higher order asymptotics for the MSE of the sample median on shrinking neighborhoods}

\author{Peter Ruckdeschel}

\institute{P. Ruckdeschel \at
              Fraunhofer ITWM, Department of Financial Mathematics, \\
              Fraunhofer-Platz 1, 67663 Kaiserslautern, Germany\\
              and Dept.\ of Mathematics, University of Kaiserslautern,\\
              P.O.Box 3049, 67653 Kaiserslautern, Germany \\
              \email{peter.ruckdeschel@itwm.fraunhofer.de}\\           
}
\else
\title{Higher order asymptotics for the MSE of the sample median on shrinking neighborhoods}
\author{}\institute{}
\fi
\date{Received: date / Accepted: date}
\maketitle
%
%
%
%
\begin{abstract}
We provide an asymptotic expansion of the maximal mean squared error (MSE)
of the sample median to be attained on shrinking gross error neighborhoods
about an ideal central distribution.
More specifically, this expansion comes in powers of $n^{-1/2}$,
for $n$ the sample size, and uses a shrinking rate of $n^{-1/2}$ as well.
This refines corresponding results of first order asymptotics
to be found in \citet{Ri:94}.\\
In contrast to usual higher order asymptotics, we do not approximate distribution
functions (or densities) in the first place, but rather expand the risk directly.\\
Our results are illustrated by comparing them to the results of a simulation study and
to numerically evaluated exact MSE's in both ideal and contaminated situation.
\keywords{sample median\and maximal mean squared error\and
neighborhoods\and higher order asymptotics\and shrinking neighborhoods\and breakdown point
}%
\subclass{MSC 62F12,62F35}
\end{abstract}
%
%
\section{Motivation/introduction}
\subsection{Simulations as starting point}
This paper 
was initiated by a
simulation study performed by the present author
and M.~Kohl at Bayreuth university in 2003 for a
presentation to be given in the framework of an
invitation by S.~Morgenthaler to EPF Lausanne.
The goal was to investigate the
finite sample behavior of procedures,
which are distinguished as (first order) asymptotically
optimal in infinitesimal robust statistics as to
maximal MSE on $\sqrt{n}$-shrinking (convex-contamination)
neighborhoods.
The results of this study for one dimensional Gaussian
location were so promising already for sample sizes $n$ down
to about 20 that it seemed worthwhile to dig a little deeper.
At closer inspection of the results, we realized
that the approximation quality of this first order asymptotics
could even be much enhanced
down to sample sizes $n=5$ and $10$ if we ignored samples
where more than half the sample stemmed from a contamination.

Asymptotically, in our shrinking neighborhood setting,
 such events carry positive, but exponentially-fast
 decaying probability for any sample size.
\subsection{Description of the main result and discussion}
These empirical findings can indeed be substantiated by theory,
deriving a uniform higher order asymptotic expansion
for the MSE on correspondingly thinned out neighborhoods
for the median, location M-estimators for monotone scores,
and one-step-constructions.
This paper deals with the median case. It is separated from
more general location M-estimators, as the techniques used there
are not available for the median due to a failure of a Cram\'er
condition. Moreover, in higher order asymptotics, even for the
ideal model, differences appear between diverse variants of
the median used for even sample size, a fact which
to the author's knowledge has not been spelt out in detail so far.

Denoting by $\tilde {\cal U}_n(r)$ the neighborhoods thinned out
by cutting away samples with more than $50\%$ contaminations,
and by $M_n$ a suitable variant of the median,
the main result of this paper is
\begin{equation}
\boldmath
\sup_{G^{(n)}\in \tilde {\cal U}_n(r)}\,n\,[{\rm MSE}(M_n,G^{(n)})]=\Tfrac{1}{4 f_0^2}\Big(
(1+r^2)+\Tfrac{r}{\sqrt{n}} a_1 +\Tfrac{1}{n}a_2\Big) +\Lo(\Tfrac{1}{n})\label{firstres}
\end{equation}
with $a_1$ and $a_2$ certain functions in $r, f_0, f_1$ and, for $a_2$, in $f_2$, where
$r$ is the contamination radius and $f_i$ are the values of the
ideal density $f$ and its first and second derivatives evaluated at the ideal median.

As a byproduct of the main result, we are able to give necessary and sufficient
conditions for a contamination to attain the RHS of \eqref{firstres};
it is astonishingly small: all mass of the contaminating
measures has essentially to be concentrated either left of $-{\rm const}\;\sqrt{\log(n)/n}$
or right of ${\rm const}\;\sqrt{\log(n)/n}$.

In formula \eqref{firstres}, we already recognize the following features of the result:\\
The speed of convergence of the ${\rm MSE}$ to its asymptotic value is uniform
on the whole (modified) neighborhood, and is one order faster in the ideal model;
besides, we may work with the original risk (instead of using a modification
as usually).

The expansion in powers of $n^{-1/2}$, in the ideal model with first
correction term at $n^{-1}$, comes surprising: Using first order von Mises
expansions (compare \eqref{ALEd} below), in the context of quantiles
(comprising the sample median), it can be shown by means of
Bahadur-Kiefer representations that the approximation
error of this expansion is an exact $\LO_{F^n}(n^{-1/4})$---cf.\
e.g.\ \citet{Ju:Se:96}. So one would expect that under  
uniform integrability, 
the first correction term in an expansion
of type \eqref{firstres} in the ideal model would be of order
$n^{-1/4}$, too. In fact, \citet{Dut:73} showed that the $L_2$-norm
of the remainder is of exact order $\LO(n^{-1/4})$ in our scaled up setup.
These results are no contradiction to \eqref{firstres}, though, as
 the remainder of course is correlated with the asymptotic linear terms.
We still do not see however how  Bahadur-Kiefer representations
translate into \eqref{firstres}.
\\
In any case, the approximations of type \eqref{firstres} prove very reasonable when
compared to both numerical and simulated values of the {\rm MSE} for finite $n$.\\
With the same techniques, we deal with a number of variants of the median for even sample sizes, and specialize these
results for the case of $F={\cal N}(0,1)$. For odd sample size, we also derive asymptotics of this kind for the variance and bias
separately.\\

In proving the main theorem, we use {\tt MAPLE} at large to
calculate tedious, lengthy asymptotic expansions which
are hardly presentable in the framework of an article.

\subsection{Setup}
We study the accuracy of the sample median as a location estimator on shrinking neighborhoods:
We work in an ideal location model ${\cal P}=\{P_{\theta}\,|\,\theta\in\R\}$ with location parameter $\theta$, observations $X_i\iid P_{\theta}$
and errors $u_i$ given by
\begin{equation}\label{idmod}
  X_i=u_i+\theta,\qquad u_i\iid F
\end{equation}
Due to translation equivariance in the location model we may limit ourselves to $\theta=0$.
We assume that
\begin{equation}\label{medunb}
F(0)=1/2
\end{equation}
i.e.; the location parameter $\theta$ equals the median of the observation distribution,
and that $F$ around $0$ admits a Lebesgue density $f$ with Taylor expansion about $0$ as
\begin{equation}\label{tayl1}
f(x)=f_0+f_1x+\Tfrac{1}{2}\,f_2x^2+\LO(x^{2+\delta_0}),\qquad f_0>0
\end{equation}
for some $\delta_0>0$. Furthermore,
Finally, we assume that there is a $\delta>0$ 
such that
\begin{equation}\label{abkling}
\int |x|^\delta f(x)\,dx<\infty
\end{equation}

\begin{Rem}\rm\small \label{slowdecay}
\begin{ABC}
\item Condition~\eqref{abkling} is taken from \citet{Jur:82} and is both necessary and sufficient for finiteness of
$\Ew_F|M_n|^\gamma$ for any $\gamma>0$, where $M_n$ is the sample median ${\rm Med}_n$  to odd sample size $n$, respectively
any variant of the sample median considered in this paper for $n$ even ---for a proof see subsection~\ref{slowdecayp}.
\item By the H\"older-inequality, $\int |x|^\eta \,F(dx)<\infty$ for each $0<\eta\leq \delta$, so we may assume that $\delta<1$.
\end{ABC}
\end{Rem}
We want to assess both variance and bias simultaneously, so we work with the setup
of shrinking neighborhoods as in \citet{Ri:94}, i.e.\ as deviations from the ideal model~(\ref{idmod}), we consider the set ${\cal Q}_n={\cal Q}_n(r)$ of
distributions
\begin{equation}\label{neighb}
  G^{(n)}:={\Ts\bigotimes\limits_{i=1}^n} G_{n,i},\qquad G_{n,i}=(1-r/\!\sqrt{n}\,) F + r/\!\sqrt{n} \,H_{n,i}
\end{equation}
for arbitrary, uncontrollable contaminating distributions $H_{n,i}$.
As usual, we interpret $G^{(n)}$ as the distribution
of the vector $(X_i)_{i\leq n}$ with components
\begin{equation}\label{Uidef}
  X_i:=(1-U_i)X_i^{\SSs \rm id}+U_i X_i^{\SSs \rm di}
\end{equation}
for $X_i^{\SSs \rm id}$, $U_i$, $X_i^{\SSs \rm di}$ stochastically independent, $X_i^{\SSs \rm id}\iid F$, $U_i\iid{\rm Bin}(1,r/\sqrt{n})$,
and $(X_i^{\SSs \rm di})\sim H_n$ for some arbitrary $H_n \in{\cal M}_1(\B^n)$.
In this setup the median can be understood as an asymptotically linear estimator with
influence curve $\psi_{\rm Med}$, allowing the expansion
\begin{equation}\label{ALEd}
{\rm Med}_n=\Tfrac{1}{n} \sum_{i=1}^{n} \psi_{\rm Med}(X_i)+\Lo_{F^n}(n^{-1/2}),\qquad \psi_{\rm Med}(x)=\frac{\sign(x)}{2f_0}
\end{equation}
--- c.f. \citet[Thm.~1.5.1.]{Ri:94}. Using a clipped version of the quadratic loss function for the estimator $S_n={\rm Med}_n$,
\begin{equation}\label{clipMSE}
{\rm MSE}_M(S_n,G):=\Ew_{G}(\min(n \,S_n^2,M)),
\end{equation}
we may proceed as outlined in \citet[p.~207]{Ri:94},
and obtain
\begin{equation} \label{modifloss}
 \lim_{M\to\infty}\lim_{n\to\infty} \sup\nolimits_{G^{(n)}}\,n\,[{\rm MSE}_M({\rm Med}_n,G^{(n)})]=(4 f_0^2)^{-1}(1+r^2)
\end{equation}
In this paper we want to (a) examine the approximation quality of \eqref{modifloss}, spelling out higher order error terms
and (b) discuss the accuracy of this approximation by comparing it to both numerical evaluations of the exact ${\rm MSE}$'s
and simulation results.\\
In contrast to usual higher order asymptotics, instead of giving approximations to distribution functions (or densities)
by Edgeworth expansions or using saddlepoint techniques---cf.\  e.g.\ \citet{Fi:Ro:90}%
---we proceed by expanding the risk directly.\\
As indicated, for (a) we need to modify the neighborhoods, admitting only such samples
where less than half of the sample is contaminated, that is $\sum U_i< n/2$ in (\ref{Uidef}).
As a side effect of this modification, we will (c) get rid of the somewhat artificial, as statistically unmotivated, modification
of the loss function by clipping \eqref{modifloss}, which is common in asymptotic statistics,  see, among others,
\citet{LC:86}, \citet{Ri:94}, \citet{BKRW:98}, \citet{VdW:98}.\\
%
\subsection{Organization of the paper}
We start with discussing the mentioned modification in detail in section~\ref{modifsec}. The central theoretical result,
Theorem~\ref{medthm} is presented in section~\ref{mainsec}. We then present some ramifications in
section~\ref{ramifsec} covering in particular several variants of the sample median for even sample size
in Theorem~\ref{medeventhm} and Proposition~\ref{aijspec};
also corresponding expansions are given for bias and variance separately in Proposition~\ref{BiasVar}.
Results are spelt out in the special case of $F={\cal N}(0,1)$ in Corollaries~\ref{medthmnormal} and \ref{medthmnormid}.
These theoretic findings are illustrated with
numerical and simulated results in section~\ref{illustrsec}. In the appendix section~\ref{proofsec},
we give proofs to all our assertions.

\begin{Rem}\rm\small
It took me some time to write things up in a readable fashion:
In order not to slay down the reader with vast number of terms,
in the proof section, we instead describe verbally how we got them
referring to a corresponding {\tt MAPLE} script available in the
internet. To give you an idea of how tedious terms become, we have included
a page of {\tt MAPLE} output on page~\pageref{abschrFig}
as a horrifying example.
\end{Rem}
\section{Modification of the shrinking neighborhood setup}\label{modifsec}\req
The shrinking--neighborhood setup guarantees
uniform weak convergence of any as.~linear estimator (ALE) to corresponding normal distributions 
on a representative subclass of the neighboring distributions of form \eqref{neighb} --- those
distributions induced by simple perturbations $Q_n(\zeta,t)$, see \citet[p.~126]{Ri:94}.\\
By the continuous mapping theorem, uniform weak convergence of
these ALE's on ${\cal Q}_n$ entails uniform convergence of the risk for continuous, bounded loss functions like the clipped version
of the MSE \eqref{clipMSE}. 
However, even this (uniform) weak convergence does not entail convergence of the risk for an unbounded loss function like the
(unmodified) MSE in general, as we show in the following proposition:
\subsection{Convergence failure of the MSE for the median}
\begin{Prop}\label{badmed}
  Let ${\cal P}$ be the location model from \eqref{idmod} with $f(0)>0$ and let ${\rm Med}_n$ be the sample median.
  Then for each odd $n=2m+1$ and to any $C>0$ there is an $x_0\in\R$  such that with
$  G^{(n)}_0=[(1-\Tfrac{r}{\sqrt{n}}) F+\Tfrac{r}{\sqrt{n}}\,\Jc_{\{x_0\}}]^n$
  \begin{equation} \label{>C}
   {\rm MSE}({\rm Med}_{n},{G^{(n)}_{0}})>C
  \end{equation}
  although, uniformly in ${\cal Q}_{n}$,
  \begin{equation}
\sqrt{n}\,\Big({\rm Med}_n-\Tfrac{1}{n} \Tsum_{i=1}^n \Tint \psi_{\rm\SSs Med}\,dG_{n,i} \Big)\circ G^{(n)}\wto {\cal N}(0,(2f(0))^{-2})
  \end{equation}
\end{Prop}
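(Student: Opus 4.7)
The strategy is elementary: exhibit an extreme one-point contamination and exploit the fact that, for odd $n=2m+1$, a strict majority of contaminated observations pins the sample median at the contamination location exactly. Sending that location to infinity then drives the unbounded quadratic risk to~$+\infty$, while the uniform weak convergence part is not new---it is simply the uniform asymptotic linearity of the median on shrinking neighborhoods.

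Concretely, let $N=\sum_{i=1}^n U_i\sim{\rm Bin}(n,r/\sqrt{n})$ count the contaminated observations. Condition on $\{N=k\}$ for some $k\geq m+1$, so that the sample consists of $k$ copies of $x_0$ and $n-k\leq m$ i.i.d.\ $F$-distributed ideal values. Letting $j$ denote the number of ideal values strictly below $x_0$, the sorted sample places the copies of $x_0$ at positions $j+1,\dots,j+k$. Because $0\leq j\leq n-k\leq m$ and $j+k\geq m+1$, the rank-$(m{+}1)$ position---that is, ${\rm Med}_n$---lies inside this block, so ${\rm Med}_n=x_0$ almost surely on $\{N\geq m+1\}$ (possible ties of ideal values with $x_0$ only reinforce the conclusion). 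Consequently
\begin{equation*}
\Ew_{G^{(n)}_0}\bigl[{\rm Med}_n^2\bigr]\;\geq\;x_0^2\,p_n,\qquad p_n:=P(N\geq m+1)\;\geq\;\binom{n}{m+1}(r/\sqrt{n})^{m+1}(1-r/\sqrt{n})^{m}\;>\;0.
\end{equation*}
Since $p_n$ depends only on $n$ and $r$, any $x_0$ with $x_0^2$ large enough (specifically $x_0^2>C/p_n$, absorbing the innocuous $n$-factor in the definition of ${\rm MSE}$) forces ${\rm MSE}({\rm Med}_n,G^{(n)}_0)>C$, which is \eqref{>C}.

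The uniform weak convergence over ${\cal Q}_n$ is not an independent claim: it follows at once from the uniform first-order asymptotic linearity of ${\rm Med}_n$ with bounded influence curve $\psi_{\rm Med}$, as in \citet[Thm.~1.5.1]{Ri:94}, since the centering there already compensates for the $O(1/\sqrt{n})$ bias induced by the neighborhood. There is no substantive obstacle in this proof; its value is to exhibit, by a concrete yet trivial mechanism, how an event of positive but exponentially small probability $p_n$ can make the unclipped MSE diverge, and hence to motivate the passage from ${\cal Q}_n$ to $\tilde{\cal U}_n(r)$ undertaken in Section~\ref{modifsec}.
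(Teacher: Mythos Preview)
Your argument for \eqref{>C} is correct and essentially identical to the paper's: both use the breakdown-point mechanism that on $\{K>m\}$ the median is pinned at (or above) $x_0$, then choose $x_0^2>C/p_n$ with $p_n=P(K>m)>0$. Your equality ${\rm Med}_n=x_0$ is a slight sharpening of the paper's ${\rm Med}_n\ge x_0$, but this makes no difference.

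For the uniform weak convergence you cite \citet[Thm.~1.5.1]{Ri:94}, but that result gives asymptotic linearity under $F^n$, not uniformly over ${\cal Q}_n$. The paper instead invokes \citet[Thm.~6.2.8]{Ri:94} and notes that although its uniform Lipschitz assumption on the scores fails for $\psi_{\rm Med}$, the only place that assumption is used---to obtain $dL(\theta)=\EM_k$---holds in our one-dimensional location setting anyway. You should either cite the correct theorem and make this observation, or otherwise justify the \emph{uniform} (over ${\cal Q}_n$) passage from asymptotic linearity to the stated weak limit. Also, the parenthetical about ``absorbing the innocuous $n$-factor'' is unnecessary: in this paper ${\rm MSE}$ denotes the raw mean squared error, so $x_0^2>C/p_n$ suffices directly.
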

\subsection{Modification of the shrinking neighborhood setup}
In view of proposition~\ref{badmed}, a straightforward modification for finite $n$
consists in permitting only such realizations of $U_1,\ldots,U_{n}$, where $K=\sum U_i < n/2$.
More precisely, for $0<\ve<1/2$, we consider the neighborhood system $\tilde {\cal Q}_n(r,\ve)$ of conditional distributions
%
\begin{equation}\label{contdef}
G^{(n)}={\cal L}\{[(1-U_i)X_i^{\SSs \rm id}+U_i X_i^{\SSs \rm di}]_i\,\Big|\,\limsup_n \frac{1}{n}\sum U_i\leq {\ve}\,\}
\end{equation}
%
%
%
%
If we apply the Hoeffding inequality (\citet[Thm.~2]{Hoef:63}) 
%
to $K=\sum_{i=1}^n U_i$ 
for the switching variables $U_i$ from \eqref{Uidef}, 
we obtain
\begin{eqnarray}
\!P(K>m)\!
&\leq&
\exp\Big(-2n (\ve-\Tfrac{r}{\sqrt{n}})^2 \Big)
\end{eqnarray}
which shows the announced asymptotic exponential negligibility of this modification.
Thus all results on convergence in law of the shrinking neighborhood
setup are not affected when passing from
${\cal Q}_n(r)$ to $\tilde {\cal Q}_n(r)$: 
Let $B_n:=\{K< n/2\}$. Then we have for any $\delta>0$ and any sequence of events $A_n$
\begin{eqnarray*}
P(A_n\,|\,B_n)&=&P(A_n\cap B_n)/P(B_n)
=P(A_n)(1+\LO(e^{-2n\ve^2/(1+\delta)}))
\end{eqnarray*}
\subsection{Connection to the breakdown point}
Our definition of the neighborhood $\tilde {\cal Q}_n(r)$ combines the shrinking neighborhood concept,
which will eventually dominate, with a sample--wise restriction;
for some number $\ve\in(0,1)$ depending on the estimator $S_n$, we only allow for samples where
strictly less than $\ve({S_n})n$ observations are contaminated.
This number $\ve({S_n})$ is actually 
just the finite sample ($\ve$-contamination) breakdown point of an estimator $S_n$ introduced by \citet{Do:Hu:83}.  
\\
Thus the concept easily generalizes from the location case to other models: Let ${\cal P}=\{P_{\theta},\,\theta\in\Theta\}$ be a parametric model and
$X_i^{\SSs \rm id}$ be $\R^k$-valued observations distributed according to the ideal situation $P_{\theta}$. We are interested in the question
whether for some given estimator $S_n$, we have uniform convergence of the risk $\int \ell(S_n-\theta)\,dQ_n$ for some loss $\ell\ge 0$ on some
(thinned out) neighborhood or not. To this end we define $\tilde{\cal Q}_n (r,\ve)$ analogously to \eqref{contdef}.
Assume that there is some $\bar \ve>0$ such that for each $n\in\N$ and $k\leq \bar k:=\ulcorner n\bar \ve \urcorner -1$
\begin{equation}
\begin{array}{ll}
\ve_0(S_n):=\inf \Big\{\ve^{\ast}(X_{n-k},S_n)\;\big|\;&X_{n-k}=(x_1,\ldots,x_{n-k})\;\mbox{a possible sample}\\
&\mbox{ configuration,}\; k\leq \bar k\Big\}\ge \bar \ve>0
\end{array}
\end{equation}
where $\ve^{\ast}(X,S)$ is the finite sample ($\ve$-contamination) breakdown point of $S$ at sample $X$.
Then, by an analogue argument to that of Proposition~\ref{badmed}, the following proposition holds:
\begin{Prop}
Assume that $\ell$ is unbounded. Then for any $\ve\ge \ve_0(S_n)$ and $r>0$, the maximal risk of $S_n$
on $\tilde{\cal Q}_n (r,\ve)$ is unbounded; in particular, uniform convergence of the risks does not hold.
\end{Prop}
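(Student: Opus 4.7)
The plan is to mirror the argument of Proposition~\ref{badmed}, lifting it from the median on the location model to an arbitrary estimator $S_n$. Given any $C>0$, I will exhibit $G^{(n)}\in\tilde{\cal Q}_n(r,\ve)$ on which $|S_n-\theta|$ exceeds a chosen threshold $L$ on an event of probability at least $p_n>0$, where $p_n$ can be chosen independently of $L$. Unboundedness of $\ell$ then yields
\begin{equation*}
\Tint\ell(S_n-\theta)\,dG^{(n)}\;\geq\;\ell_{\min}(L)\,p_n
\end{equation*}
with $\ell_{\min}(L):=\inf_{|x|\geq L}\ell(x)\to\infty$ as $L\to\infty$, so the supremum over $\tilde{\cal Q}_n(r,\ve)$ of the risk is $+\infty$; a supremum of $+\infty$ is of course inconsistent with uniform convergence to any finite limit.

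To carry this out, I first unpack the breakdown hypothesis: because $\ve\geq\ve_0(S_n)$ and $\ve_0$ is defined as an infimum of finite sample $\ve$-contamination breakdown points, for any $\eta>0$ I can fix a sample configuration $X^\ast=(x_1^\ast,\ldots,x_{n-k}^\ast)$ with $k\leq\bar k$ whose breakdown point does not exceed $\ve+\eta$. By the Donoho--Huber definition of $\ve^\ast$, there then exist $k_0\leq \lceil \ve n\rceil$ positions $i_1,\ldots,i_{k_0}\in\{1,\ldots,n\}$ and replacement values $y_j^{(L)}$ with $\|y_j^{(L)}\|\to\infty$ as $L\to\infty$ for which $|S_n|\geq L$ at the perturbed sample. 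I then build $G^{(n)}$ by setting $H_{n,i}=\Jc_{\{y_{j(i)}^{(L)}\}}$ for $i\in\{i_1,\ldots,i_{k_0}\}$ and $H_{n,i}=F$ otherwise, and I define the critical event $B_n$ by requiring $U_i=1$ precisely at $i_1,\ldots,i_{k_0}$ (forcing $\sum U_i=k_0\leq \ve n$, so the conditioning in \eqref{contdef} is met) together with $X_i^{\SSs\rm id}$ lying in a small neighborhood of $x_i^\ast$ for the remaining indices. Independence of the $U_i$ and the $X_i^{\SSs\rm id}$, combined with positivity of the Lebesgue density on these neighborhoods, gives $P(B_n)\geq p_n>0$ that is independent of $L$.

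The delicate step is passing from the breakdown statement at the single configuration $X^\ast$ to the positive-probability event $B_n$: I must argue that on a small enough neighborhood of $X^\ast$, replacing the chosen coordinates by outliers of size $L$ still drives $|S_n-\theta|$ beyond some $L'\to\infty$. In the location setting this follows from translation equivariance of $S_n$ together with mild regularity in the non-contaminated arguments, and more generally the infimum in the definition of $\ve_0$ lets one pick $X^\ast$ at a generic configuration where the breakdown behavior is stable under small perturbations. Once this stability is in place, $B_n\subseteq\{|S_n-\theta|\geq L'\}$ for suitable $L$, and the risk lower bound of the first paragraph completes the proof and, in particular, rules out any form of uniform convergence.
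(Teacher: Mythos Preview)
Your approach --- exhibit a contamination that, with positive probability independent of the outlier size $L$, places at least $k_0\le\lceil \ve n\rceil$ contaminated coordinates into a sample whose ideal part lies near a low-breakdown configuration $X^\ast$ --- is precisely the analogue of Proposition~\ref{badmed} that the paper invokes. The paper supplies no detail beyond that one-line reference, so you are already more explicit than the text.

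The step you yourself flag as ``delicate'' is the only genuine gap, and you do not close it. In Proposition~\ref{badmed} no neighborhood of a fixed configuration is needed: the event $\{K>m\}$ forces ${\rm Med}_n\ge x_0$ \emph{regardless} of the ideal coordinates, so $p_n$ is automatically independent of $x_0$. In the general situation there is no such uniformity --- breakdown is defined pointwise in the sample, and the infimum defining $\ve_0(S_n)$ may well be approached only along exceptional (possibly $P_\theta^{\,n}$-null) configurations. Your appeal to a ``generic'' $X^\ast$ with stable breakdown behavior is an implicit continuity hypothesis on $S_n$ (or on $X\mapsto\ve^\ast(X,S_n)$), and your ``positivity of the Lebesgue density'' is an assumption on $P_\theta$ that the generalized setup in the paper does not state. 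The paper's terse ``analogue argument'' sweeps exactly the same issue under the rug, so your proposal matches the paper in both strategy and level of rigor; just be aware that turning this into a complete proof requires additional regularity on $S_n$ and on $P_\theta$ beyond what is written.
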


The other direction of this connection is more involved and is deferred to a subsequent paper.
Under slight additional assumptions, for suitably constructed ALEs to bounded influence
curves and for continuous, polynomially growing loss functions, uniform convergence of the risk
holds on $\tilde{\cal Q}_n (r,\ve)$ for any $\ve <\bar \ve$.
Note that this thinning out for continuous loss functions $\ell$ is not needed if $\ell$ is bounded.
\section{Higher order asmyptotics for the MSE of the sample median}\label{mainsec}\req
%
For $H\in{\cal M}_1(\B^n)$ and an ordered set of indices $I=(1\leq i_1<\ldots<i_k\leq n)$ denote $H_{I}$ the marginal of
$H$ with respect to $I$.
\begin{Def}\label{thinout}
Consider three sequences $c_n$, $d_n$, and   $\kappa_n$ in $\R$, in $(0,\infty)$, and in $\{1,\ldots,n\}$, respectively.
We say that the sequence $(H^{(n)})\subset{\cal M}_1(\B^n)$ is {\em $\kappa_n$--concentrated left [right] of $c_n$ up to $\Lo(d_n)$}, if
for each sequence of ordered sets $I_n$ of cardinality $i_n\leq \kappa_n$
\begin{equation}
1-H^{(n)}_{I_n}\big((-\infty;c_n]^{i_n}\big)=\Lo(d_n)
\qquad
\Big[\,
1-H^{(n)}_{I_n}\big((c_n,\infty)^{i_n}\big)=\Lo(d_n)\,
\Big]
\end{equation}
\end{Def}
\subsection{Main theorem}
\begin{Thm}\label{medthm}\begin{ABC}
\item
In the location model \eqref{idmod} with ideal central distribution $F$  of
finite Fisher information of location, we assume conditions \eqref{tayl1} to \eqref{abkling}.
Then for any $\ve<1/2$, for $G^{(n)}$ varying in $\tilde {\cal Q}_n(r,\ve)$
of \eqref{contdef} it holds
\begin{equation}
\sup_{G^{(n)}}\,n\,[{\rm MSE}({\rm Med}_n,G^{(n)})]=\frac{1}{4 f_0^2}\big(1+r^2+\frac{r}{\sqrt{n\,}\,}\,a_1+\frac{1}{n}\,a_2+\Lo(1/n)\big)\label{Mall}
\end{equation}
for
\begin{eqnarray}
a_1&=& 2(1+r^2)+\frac{r^2+3}{2}\frac{|f_1|}{f_0^2} \label{Malla1}\\
a_2&=&(-2+3r^2+3r^4)+\frac{3r^2(3+r^2)}{2}\frac{|f_1|}{f_0^2}-\frac{3+6r^2+r^4}{12}\frac{f_2}{f_0^3}+ \nonumber\\
&&\qquad +\frac{5(3+6r^2+r^4)}{16}\frac{f_1^2}{f_0^4} \label{Malla2}
\end{eqnarray}
\item The maximal contamination is achieved by any sequence of contaminating measures $(H_{n})$,
such that for $k_1>1$ and $k_2>\sqrt{5/2}$, 
and for 
$\kappa_n=\ulcorner k_1r\sqrt{n}\; \urcorner$, eventually in $n$, 
either
\begin{equation}
  \label{contbed1}
(H_n) \mbox{ is  $\kappa_n$--concentrated left of $-\Tfrac{k_2}{f_0}\sqrt{\log(n)/n}$ up to }\Lo(n^{-1})
\end{equation}
or
\begin{equation}
  \label{contbed2}
(H_n) \mbox{ is  $\kappa_n$--concentrated right of $\Tfrac{k_2}{f_0}\sqrt{\log(n)/n}$ up to }\Lo(n^{-1})
\end{equation}
More precisely, if $f_1<0$ [$f_1>0$], the maximal MSE is achieved up to $\LO(n^{-2})$ by contaminations according to \eqref{contbed1}
[\eqref{contbed2}], and according to either of the two if $f_1=0$.
\end{ABC}
\end{Thm}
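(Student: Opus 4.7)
The plan is to condition on $K = \sum_i U_i$, the number of contaminated observations. Because $\tilde {\cal Q}_n(r,\ve)$ restricts to $K < n/2$ and the Hoeffding bound already established in the paper gives $P(K \geq n/2) = \Lo(n^{-c})$ for every $c > 0$, the conditioning factor $1/P(K < n/2) = 1 + \Lo(n^{-c})$ is negligible at any polynomial order. So it suffices to expand the unconditional quantity $\sum_{k < n/2} P(K = k)\, \Ew[{\rm Med}_n^2 \mid K = k]$ and to optimise over $H_n$ afterwards.

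For fixed $k$, given $K = k$ the sample consists of $n-k$ i.i.d.\ draws from $F$ together with $k$ draws from $H_n$, and for odd $n = 2m+1$ the median is the $(m+1)$-th order statistic of the pooled sample. Conditioning further on the contaminated values $Z_1, \ldots, Z_k$ and setting $J_t := \#\{i \leq k : Z_i \leq t\}$, one has
\[
P({\rm Med}_n \leq t \mid K = k, Z_1, \ldots, Z_k) = P\bigl({\rm Bin}(n-k, F(t)) \geq m + 1 - J_t\bigr),
\]
from which $\Ew[{\rm Med}_n^2 \mid K = k]$ is obtained by differentiating in $t$ and integrating against $t^2$. After the natural rescaling $t = s/\sqrt{n}$, I would insert the Taylor expansion \eqref{tayl1} of $F$ around $0$, approximate the binomial tail by its normal limit together with correction terms in powers of $n^{-1/2}$, and collect. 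This is the step the author offloads to \texttt{MAPLE}; after averaging in $k$ against the binomial weights $P(K = k)$, it produces the explicit coefficients $a_1, a_2$ in \eqref{Malla1}--\eqref{Malla2}.

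For part (b), I would argue that given $K = k$, the conditional MSE is monotone in each $|Z_i|$ once $Z_i$ is sufficiently far from $0$, because pushing a $Z_i$ to $\pm\infty$ forces the pooled order statistic to shift monotonically. The relevant threshold is $\pm (k_2/f_0)\sqrt{\log n / n}$ with $k_2 > \sqrt{5/2}$: below it, the contribution of a $Z_i$ to the second-order term differs from its point-mass-at-infinity contribution by $\Lo(n^{-1})$, and the logarithmic factor is dictated by a Mills-ratio estimate for the binomial tail. The choice of side (left vs.\ right) is fixed by the sign of $f_1$: the odd term in \eqref{tayl1} breaks the symmetry at order $n^{-1}$, except when $f_1 = 0$ where both sides are equally worst-case. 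The restriction $k_1 > 1$ in $\kappa_n = \ulcorner k_1 r \sqrt{n}\, \urcorner$ ensures that, with $\Lo(n^{-1})$ error, essentially all of the contaminating mass is placed on the chosen side.

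The main obstacle is uniformity. The supremum runs over all admissible $H_n$ and must be controlled to order $\Lo(n^{-1})$ uniformly in $H_n$ and in $k$ up to $n/2 - 1$, where the effective ideal sample size $n - k$ can drop to $n/2$. Crucially, because the median fails a Cram\'er condition (as noted in the introduction), an Edgeworth expansion of its distribution function is not available; one must therefore work directly with the binomial tail representation and carry the remainders through the scaled Taylor expansion of $F$. This is what forces the precise $\sqrt{\log n/n}$ threshold in (b), rather than a coarser $n^{-1/2}$ rate, and requires uniform control over the normal approximation to ${\rm Bin}(n-k, F(s/\sqrt n))$ valid simultaneously for all $k < n/2$.
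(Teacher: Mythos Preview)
Your overall architecture---condition on $K$, then on the contaminated values via $J_t$, represent the conditional c.d.f.\ of the median as a binomial tail, expand, and optimise over $H_n$---matches the paper's. But two structural ingredients of the paper's proof are missing from your outline, and one of your stated obstacles is misdiagnosed.

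First, you frame the main difficulty as obtaining expansions ``uniformly in $H_n$ and in $k$ up to $n/2-1$''. The paper does \emph{not} attempt this. Instead it partitions the $(K,t)$-plane into four regions and shows that only one contributes at order $n^{-1}$. The key step you are missing is that $P(K>k_1 r\sqrt{n})$ decays like $\exp(-\kappa r\sqrt{n})$ by a sharp Hoeffding bound (the paper's Lemma on the binomial tail), so the entire range $k_1 r\sqrt{n}<k<n/2$ is thrown away exponentially fast---it contributes $o(n^{-c})$ for every $c$. One therefore only ever needs the expansion for $k=O(\sqrt{n})$, where $n-k\sim n$ and everything is uniformly well-behaved. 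Trying to control the normal approximation to ${\rm Bin}(n-k,\cdot)$ for $k$ close to $n/2$ would be both harder and unnecessary.

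Second, you do not mention the tails in $t$. The paper separately treats $|t|\in[k_2\sqrt{\log n/n}/f_0,\,n^2]$ (bounded via the peak of $F^{m-j}\bar F^{m+j-k}$ at its mode together with Stirling) and $|t|>n^2$ (where condition~\eqref{abkling} is used essentially, via the bound $\int|x|^\delta\,dF<\infty$). Without these tail estimates you cannot justify restricting the rescaled integral to the window $|t|\le k_2\sqrt{\log n/n}/f_0$.

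Methodologically, the paper does not pass through a normal approximation of the binomial tail with corrections. It works with the exact density $g_{n,j,k}(t)=(n-k)\binom{2m-k}{m-j}F(t)^{m-j}\bar F(t)^{m+j-k}f(t)$, centres at the mode $x_{n,j,k}=F^{-1}((m-j)/(2m-k))$, substitutes $y=(t-x_{n,j,k})\sqrt{8m}\,f_0$, and expands everything (Stirling, $F$, $f$) in powers of $m^{-1/2}$ around that point. Your direct rescaling $t=s/\sqrt{n}$ without centring at the $j,k$-dependent mode would work for the leading term but makes the bookkeeping of the $n^{-1/2}$ and $n^{-1}$ corrections considerably harder, since the Gaussian peak of the density moves with $j$ and $k$. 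After the centred expansion the paper reads off that the integrand is, to leading order, $(y+\sqrt{2}\,\tilde\jmath)^2/(4f_0^2)$ with $\tilde\jmath=(k/2-j)/\sqrt{m}$; this makes the maximisation in $j$ transparent (extremal $|\tilde\jmath|=\tilde k/2$, i.e.\ $j\equiv 0$ or $j\equiv k$), which is precisely part~(b). Your monotonicity heuristic points at the same conclusion but does not by itself deliver the $o(n^{-1})$ control needed to justify it; the paper gets that from the explicit polynomial structure of the expansion and Lemma~\ref{lemnormlog}.
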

\begin{Rem}\rm\small \label{remmain}
  \begin{ABC}
    \item This result of course also covers the ideal model ($r=0$), and is also relevant for the fixed neighborhood approach:
    If for fixed $n$, we formally plug in $r=s\sqrt{n}$ (for $s$ small in comparison to $\sqrt{n}\,$) this gives a corresponding result
    for the maximal MSE of the sample median on a neighborhood of fixed size $s$. (``formal'', as we cannot control the remainder for arbitrary $s<1$.)
    \item \label{n1} If one is only interested in the behavior of $n\, {\rm MSE}$ up to order $\Lo(n^{-1/2})$, one may weaken assumption
     \eqref{tayl1} to: For      some $\delta>0$,
    \begin{equation}\label{atayl1}
f(x)=f_0+f_1x+\LO(x^{1+\delta}),\qquad f_0>0
    \end{equation}
    \item\label{rema} Conditions \eqref{contbed1} and \eqref{contbed2} imply that it is sufficient to contaminate $F^n$ by
    measures $H_{n}$ the one dimensional marginals of which are either concentrated right of $C\,\sqrt{\log(n)/n}$
    or left of $-C\,\sqrt{\log(n)/n}$ for some constant $C>0$ in order
    to obtain a maximal MSE --- an astonishingly modest contamination!  With respect to\ \eqref{ALEd}, this is plausible however, as $|\psi_{\SSs \rm Med}|$ attains its maximal value for  any $x\not=0$.\\
    The thinning out of the marginals by means of Defintion~\ref{thinout} even tells us that of
    the $n$ potentially contaminating $X_i^{\SSs \rm di}$ only all subsets of cardinality roughly $\sqrt{n}$
    need to be ``large'' at all,  the remaining coset (of cardinality order $n(1+\Lo(1))$) of contaminations
     might even stem from the ideal situation!\\
     As shown in Proposition~\ref{neccond}, conditions~\eqref{contbed1} resp.\ \eqref{contbed2} are almost necessary.

    \item The sample median for odd sample size as well as all variants of the median
    considered in Proposition~\ref{medeventhm} come up with the same leading term $
    {(1+r^2)}/{(4f_0^2)}$ for $n\,{\rm MSE}$--- according to first order asymptotics \eqref{modifloss} (with modified loss there!).
    \item In all variants of the sample median considered in Theorem~\ref{medthm} and Proposition~\ref{medeventhm},
    the second order correction is positive, so that for any $r>0$ we eventually underestimate the MSE by first order asymptotics.
  \end{ABC}
\end{Rem}
\subsection{Ramifications}\label{ramifsec}
As simulations in section~\ref{simstudy} were made for even sample size, we present an analogue to
Theorem~\ref{medthm} for even sample size below. As there are infinitely many sample medians for even sample size,
we consider the following variants:
\begin{itemize}
    \item the order statistics $X_{[m:n]}$
    \item the order statistics $X_{[(m+1):n]}$
    \item the randomized estimator $M'_n:=UX_{[m:n]}+(1-U)X_{[(m+1):n]}$
          with some randomization $U\sim{\rm Bin}(1,1/2)$
    \item the midpoint--estimator $\bar M_n:=(X_{[m:n]}+X_{[(m+1):n]})/2$
    \item the bias corrected estimator $M''_n:=(X_{[m:n]}+\frac{1}{2n\,f_0})$
\end{itemize}
\begin{Prop}\label{medeventhm}
Under the assumptions of Theorem~\ref{medthm}, for even sample size $n=2m$,
for the sample median variants $X_{[m:n]}$, $X_{[(m+1):n]}$, $M_n'$, $\bar M_n$, $M''_n$,
here denoted by $M_n$ generically, for any $\ve<1/2$, for $G^{(n)}$ varying in $\tilde {\cal Q}_n(r,\ve)$
of \eqref{contdef} it holds
\begin{eqnarray}
&&  \sup_{G^{(n)}}\,n\,[{\rm MSE}(M_n,G^{(n)})]=\frac{1}{4 f_0^2}\Bigg\{
(1+r^2)+\Tfrac{r}{\sqrt{n}}\Big(a_{1,0}+a_{1,1}\Tfrac{f_1}{f_0^2}\Big)+\nonumber\\
&&\qquad +\Tfrac{1}{n}\Big(a_{2,0}+a_{2,1}\Tfrac{f_1}{f_0^2}+a_{2,2}\Tfrac{f_2}{f_0^3}+
a_{2,3}\Tfrac{f_1^2}{f_0^4}\Big)\Bigg\}+\Lo(\Tfrac{1}{n})\label{Malleven}
\end{eqnarray}
for some real numbers $a_{i,j}=a_{i,j}(M_n)$ which are given in detail in Proposition~\ref{aijterms}.\\
In any variant, the maximal contamination is achieved by contaminating measures $H_{n}$
according to either condition \eqref{contbed1} or \eqref{contbed2} where the distinction between
these two is made as in the case of odd sample size.
\end{Prop}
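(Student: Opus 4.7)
The plan is to reduce the even-sample-size analysis to the machinery already set up for Theorem~\ref{medthm} and then dispatch the five variants by linearity, a deterministic location shift, or a joint-distribution computation. For the two pure order-statistic variants $X_{[m:n]}$ and $X_{[(m+1):n]}$ I would repeat the odd-case scheme: condition on $K=\sum U_i$; on $\{K=k\}$ treat the order statistic as the $j$-th order statistic ($j\in\{m,m+1\}$) of a sample of size $n$ with $n-k$ coordinates drawn from $F$ and $k$ from the worst $H_n$; express the distribution function of $X_{[j:n]}$ at $t$ via a Binomial tail; and integrate $t^2$ against this distribution to obtain $n\,\Ew X_{[j:n]}^2$ up to order $\Lo(1/n)$. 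The two cases $j=m$ and $j=m+1$ differ from $j=(n+1)/2$ of the odd case by the rank offset $\pm 1/2$, and it is precisely this offset that, when combined with the Taylor expansion \eqref{tayl1} inverted at the quantile level, produces the variant-dependent coefficients $a_{1,0},a_{2,0},\dots$

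Once $n\,\Ew X_{[m:n]}^2$ and $n\,\Ew X_{[(m+1):n]}^2$ are at hand, the variants $M_n'$ and $M_n''$ follow for free. Conditioning on the Bernoulli randomization $U$ gives $\Ew(M_n')^2=\tfrac{1}{2}\bigl(\Ew X_{[m:n]}^2+\Ew X_{[(m+1):n]}^2\bigr)$, so the expansion for $M_n'$ is the arithmetic mean of the two order-statistic expansions. For $M_n''=X_{[m:n]}+\tfrac{1}{2nf_0}$ the shift is deterministic: expand $(X_{[m:n]}+\tfrac{1}{2nf_0})^2$ and plug in the already-computed expansion of $\Ew X_{[m:n]}$ (the shift is tuned to absorb the leading $1/n$-bias of $X_{[m:n]}$). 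The variant $\bar M_n$ requires the bivariate law of $(X_{[m:n]},X_{[(m+1):n]})$: using
\begin{equation*}
\Ew \bar M_n^2=\tfrac{1}{4}\Bigl(\Ew X_{[m:n]}^2+2\,\Ew\,X_{[m:n]}X_{[(m+1):n]}+\Ew X_{[(m+1):n]}^2\Bigr),
\end{equation*}
the cross-moment is evaluated by integrating the joint tail $\Pr(X_{[m:n]}\le s,\,X_{[(m+1):n]}\le t)$, which on $\{K=k\}$ reduces to a bivariate Binomial sum counting the event ``exactly $m$ observations in $(-\infty,s]$ and at least one of the remaining $n-m$ in $(s,t]$''. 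All of these expansions up to $\Lo(1/n)$ would, as in the odd case, be carried out with MAPLE.

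The supremum over $H_n$ inside $\tilde{\cal Q}_n(r,\ve)$ and the sufficient conditions \eqref{contbed1}--\eqref{contbed2} for its attainment are inherited from Theorem~\ref{medthm}: each variant is a monotone functional of counts $\#\{X_i\le t\}$ near the ideal median, so concentrating the $\approx k_1 r\sqrt{n}$ relevant contaminants on one of two Dirac positions beyond $\pm(k_2/f_0)\sqrt{\log n/n}$ maximizes the leading correction, with the left/right choice dictated by the sign of $f_1$. Exponential smallness of $\Pr(K\ge n/2)$ under $\ve<1/2$ again renders the conditioning implicit in \eqref{contdef} harmless at order $\Lo(1/n)$.

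I expect the main obstacle to be the cross-term in $\bar M_n$: the bivariate Binomial sum has to be Taylor-expanded simultaneously in $s$ and $t$ while keeping the error uniform in $H_n$, and clean coefficients $a_{2,\cdot}$ surface only after delicate cancellations between the marginal and cross contributions, so the MAPLE bookkeeping is noticeably heavier than for the odd case. A secondary nuisance, already encountered in Theorem~\ref{medthm}, is uniformity of the $\Lo(1/n)$ remainder: one isolates the ``core'' event $\{|X_{[j:n]}|\le c\sqrt{\log n/n}\}$ where the Taylor expansion of $F$ is valid, and then uses \eqref{abkling} together with exponential quantile concentration to dominate the complementary contribution by a term that is itself $\Lo(1/n)$.
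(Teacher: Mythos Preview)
Your plan for $X_{[m:n]}$, $X_{[(m+1):n]}$, $M_n'$, and $M_n''$ matches the paper: the pure quantiles are handled exactly as in the odd case (with the rank offset $\pm 1/2$ producing the variant-dependent $a_{i,j}$), and $M_n'$, $M_n''$ then follow by averaging and by a deterministic shift, respectively.

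For $\bar M_n$ your route differs from the paper's. You propose to decompose $\Ew\bar M_n^2$ into the two marginal second moments plus the cross moment $\Ew X_{[m:n]}X_{[(m+1):n]}$ and to compute the latter from the joint tail. The paper instead works directly with the density of $\bar M_n$: starting from the joint density of $(X_{[m:n]},X_{[(m+1):n]})$, it writes $n\,{\rm MSE}(\bar M_n)$ as a double integral of $(u+s)^2/4$ over $\{s\le u\}$; in the inner integral it sets $\Delta(s,u)=(F(u)-F(s))/F(u)$, expands $(m-k-1)\log(1-\Delta)$, and after three integrations by parts in $s$ obtains a closed expression $H_k(u)$ for the inner integral up to $\Lo(n^{-2})$. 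This reduces the whole computation to the \emph{single} integral $\int H_k(t)\,g_{n,k,k}(t)\,dt$, i.e.\ exactly the odd-case formula with $t^2$ replaced by $H_k(t)$, so the remaining bookkeeping is literally that of Theorem~\ref{medthm}. Your decomposition is perfectly valid and leads to the same answer, but it means handling three separate expansions (two of which you already have) and then cancelling cross terms, whereas the paper's reduction to $H_k(t)$ recycles the odd-case machinery in one stroke. A small slip: your description of the event $\{X_{[m:n]}\le s,\,X_{[(m+1):n]}\le t\}$ is not ``exactly $m$ in $(-\infty,s]$ and at least one of the remaining in $(s,t]$''; it is ``at least $m$ in $(-\infty,s]$ and at least $m+1$ in $(-\infty,t]$''. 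This only affects the bookkeeping, not the strategy.
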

%
\begin{Prop} \label{aijspec}
[{Specification of the terms \boldmath$a_{i,j}$}] \label{aijterms}$\mbox{ }$
%
Splitting up $a_{2,0}$, $a_{2,1}$, $a_{2,2}$ according to
\begin{equation}\label{erstspec}
a_{2,0}=a_{2,0,r}+a_{2,0,c},\qquad a_{2,1}=a_{2,1,r}+a_{2,1,c},\qquad a_{2,2}=a_{2,2,r}+a_{2,2,c}
\end{equation}
we get
\begin{ABC}
\item {\em Identical terms for all variants\/}:
\begin{equation}
a_{2,3}  =\Tfrac{5(r^4+6r^2+3)}{16}, \qquad 
a_{2,2,c}=-1/4, \qquad a_{2,2,r}=-\Tfrac{(r^4+6r^2)}{12}
\end{equation}
\item {\em Varying terms in the ideal model\/}:  
\begin{equation}
\begin{array}{l}
a_{2,0,c}(M''_n)=-2,\qquad a_{2,0,c}(\bar M_{n})= -3\\
a_{2,0,c}(X_{[m:n]})= a_{2,0,c}(X_{[(m+1):n]})= a_{2,0,c}(M'_n) = -1
\end{array}
\end{equation}
\item {\em Remaining $a_{i,j}$ for $\bar M_n$, $M'_n$, and $M''_n$\/}:
\begin{equation}
\begin{array}{l}
a_{1,0}(M''_n)=a_{1,0}(M'_n)=a_{1,0}(\bar M_n)=2(1+r^2),\\
a_{1,1}(M''_n)=a_{1,1}(M'_n)=a_{1,1}(\bar M_n)=(r^2+3)\sign(f_1)/2,
\end{array}
\end{equation}
\begin{equation}
\begin{array}{l}
a_{2,0,r}(M'_n)=a_{2,0,r}(\bar M_n)=3r^4+3r^2=a_{2,0,r}(M''_n)-2r^2\sign(f_1)
\end{array}
\end{equation}
\begin{equation}\label{letztspec}
\begin{array}{l}
a_{2,1,c}(M'_n)=a_{2,1,c}(\bar M_n)=0,\quad a_{2,1,c}(M''_n)=1,\\
a_{2,1,r}(M'_n)=a_{2,1,r}(\bar M_n)=\Tfrac{3\,r^2(3+r^2)\sign(f_1)}{2}=a_{2,1,r}(M''_n)-r^2
\end{array}
\end{equation}
\item {\em Remaining $a_{i,j}$ for $X_{[m:n]}$ and $X_{[(m+1):n]}$}:
\begin{equation}
a_{2,1,c}(X_{[m:n]})=3/2=-a_{2,1,c}(X_{[m:n]})
\end{equation}
For $X_{[m:n]}$ and $X_{[(m+1):n]}$, condition~\eqref{contbed1} [\eqref{contbed2}] applies 
if $4f_0^2 >\;[<]\; -(3+r^2)f_1$ 
Correspondingly, let \begin{equation}
s'=\left\{\begin{array}{lll}
  \hphantom{-}1&\mbox{for}&X_{[m:n]}\\
  -1&\mbox{for}&X_{[(m+1):n]}
\end{array}\right.
\end{equation}
and
\begin{equation}
s=\sign((3+r^2)f_1+s' 4f_0^2 )\label{s-def}
\end{equation}
Then the remaining $a_{i,j}$ for  $X_{[m:n]}$ and $X_{[(m+1):n]}$ are given by
\begin{equation}
\begin{array}{rclrcl}
a_{1,0}&=&2+2s's+2r^2, & a_{2,1,r}&=&3s's\Big(\big(3+s\big)r^2+r^4\Big)/2\\
a_{2,0,r}&=&3r^4+\big(3+4s\big)r^2
\end{array}
\end{equation}
In case $4f_0^2 = s'(3+r^2)f_1$,  both condition~\eqref{contbed1} and \eqref{contbed2} up to $\Lo(n^{-2})$ lead to the same MSE.
\end{ABC}
\end{Prop}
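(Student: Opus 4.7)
The plan is to adapt the machinery behind Theorem~\ref{medthm}'s proof---where the risk is expanded directly in powers of $n^{-1/2}$ with {\tt MAPLE} handling the tedious algebra---to each median variant, then combine the results using algebraic relations between the five estimators. Concretely, I would first treat the two order statistics $X_{[m:n]}$ and $X_{[(m+1):n]}$ directly: the density of $X_{[k:n]}$ under the contamination mixture $G_{n,i}$, conditional on $\sum U_i=\kappa < n/2$, reduces---after marginalizing over the positions of the contaminated observations---to the density of the $k$-th order statistic in a heterogeneous sample with $n-\kappa$ draws from $F$ and $\kappa$ from a contaminating measure. Substituting the Taylor expansion \eqref{tayl1} for $F$ near $0$, expanding binomial and multinomial coefficients in powers of $n^{-1/2}$, and integrating termwise produces the expansion \eqref{Malleven} for these two variants; the coefficients $a_{i,j}$ can be read off, and this is where the sign factor $s'$ enters, encoding the asymmetry between $X_{[m:n]}$ and $X_{[(m+1):n]}$. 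The split \eqref{erstspec} into ``ideal'' parts (subscript $c$) and ``radius-driven'' parts (subscript $r$) reflects the natural separation already present in the {\tt MAPLE} output.

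The coefficients for the remaining three variants then follow by elementary algebra. For $M'_n$, independence of $U$ from the order statistics together with $\Ew U = 1/2$ yields $n\,{\rm MSE}(M'_n)=\tfrac{1}{2}\big(n\,{\rm MSE}(X_{[m:n]})+n\,{\rm MSE}(X_{[(m+1):n]})\big)$, which averages away every antisymmetric $s'$-contribution and explains why $a_{1,0}$, $a_{2,0,c}$, $a_{2,0,r}$, $a_{2,1,c}$, $a_{2,1,r}$ take their particular values for $M'_n$. For $\bar M_n$, the expansion of $\Ew \bar M_n^2$ additionally requires the cross-moment $\Ew X_{[m:n]}X_{[(m+1):n]}$, computed from the joint density of the two adjacent order statistics; the same symmetrization kills the sign-dependent terms and, up to the ideal-model correction $a_{2,0,c}(\bar M_n)=-3$, reproduces the $M'_n$ coefficients. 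For $M''_n$, I would use $\Ew(M''_n)^2=\Ew X_{[m:n]}^2+\tfrac{1}{nf_0}\Ew X_{[m:n]}+\tfrac{1}{4n^2 f_0^2}$, noting that the shift $\tfrac{1}{2nf_0}$ is calibrated to cancel the leading $n^{-1/2}$-scale bias of $X_{[m:n]}$, so $M''_n$ inherits the $a_{1,\cdot}$-coefficients of $M'_n$ and $\bar M_n$ while its distinct $a_{2,j}$-terms emerge from the residual shift contributions.

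The worst-case contamination is identified along the lines of Theorem~\ref{medthm}(b). For the symmetric variants $\bar M_n$, $M'_n$, $M''_n$ the argument copies the odd-$n$ case, with the sign of $f_1$ alone determining \eqref{contbed1} versus \eqref{contbed2}. For $X_{[m:n]}$ and $X_{[(m+1):n]}$ the asymmetry introduces an additional $\pm 4 f_0^2$ term into the optimization criterion, yielding the sign $s=\sign\bigl((3+r^2)f_1+s' 4 f_0^2\bigr)$ of \eqref{s-def}; in the boundary case where this quantity vanishes, both candidate directions produce the same MSE up to $\Lo(n^{-2})$, as claimed.

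The main obstacle is the bookkeeping: the {\tt MAPLE}-aided computations themselves are routine once set up, but one must carefully track the $n^{-1}$-order corrections stemming from the asymmetry of the two central order statistics and verify that the symmetrizations used for $M'_n$ and $\bar M_n$ genuinely cancel all $s'$-dependent pieces across $a_{1,0}$, $a_{2,1,c}$, $a_{2,0,r}$ and $a_{2,1,r}$. A secondary subtlety is the boundary case of the worst-case-contamination criterion, which demands a separate verification that the two candidate directions deliver identical MSE expansions to the claimed order $\Lo(n^{-2})$.
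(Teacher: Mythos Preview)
Your plan is sound and matches the paper's own route: it treats the pure quantiles $X_{[m:n]}$, $X_{[(m+1):n]}$ by a direct rerun of the odd-sample machinery, handles $M''_n$ via the deterministic shift identity, and attacks $\bar M_n$ through the joint density of the two central order statistics (the paper packages this as computing the density of $\bar M_n$ itself via \eqref{denseevenid}/\eqref{denseeven} and then an inner integral $H_k(u)$ evaluated by three integrations by parts, which is equivalent to your cross-moment computation). The randomized $M'_n$ is not spelled out separately in the paper's proof either; your averaging observation is the natural argument.

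One point where your write-up should be sharpened: the identity $n\,{\rm MSE}(M'_n,G)=\tfrac12\big(n\,{\rm MSE}(X_{[m:n]},G)+n\,{\rm MSE}(X_{[(m+1):n]},G)\big)$ holds for a \emph{fixed} contamination $G$, and the supremum must be taken \emph{after} averaging. You cannot simply average the $a_{i,j}(X_{[m:n]})$ and $a_{i,j}(X_{[(m+1):n]})$ as listed in the Proposition, because those already encode each quantile's own optimal direction $s$, and these directions typically differ (e.g.\ for $f_1=0$ the two pure quantiles have opposite worst-case sides). What you need is the expansion of each pure quantile's MSE for a \emph{given} contamination side $\sigma\in\{\pm1\}$, average those (the $s'$-odd pieces then cancel), and only then optimize over $\sigma$; this optimization is what produces the $\sign(f_1)$ in $a_{1,1}(M'_n)$ and related terms. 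Your later paragraph on the worst-case contamination suggests you have this in mind, but the earlier sentence could be misread as averaging the already-maximized coefficients, which would give wrong values (e.g.\ $a_{1,0}=4+2r^2$ instead of $2+2r^2$).
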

\begin{Rem}\rm\small \label{oddnot}
In case of the sample median for odd sample size,
\begin{equation}
  \begin{array}{rclrclrcl}
    a_{1,0}&=&2(1+r^2),&a_{1,1}&=&\frac{(r^2+3)\sign(f_1)}{2}&a_{2,0,c}&=&-2,\\
    a_{2,0,r}&=&3r^2+3r^4,&a_{2,1,c}&=&0,&a_{2,1,r}&=&\frac{3r^2(3+r^2)\sign(f_1)}{2},\\
    a_{2,2,c}&=&-\frac{1}{4},&a_{2,2,r}&=&-\frac{6r^2+r^4}{12},&a_{2,3}&=&\frac{5(3+6r^2+r^4)}{16}
      \end{array}\nonumber
\end{equation}
\end{Rem}
%
\begin{Rem}\rm\small
It is a well-known consequence of the Jensen inequality that convexity of both loss and admitted estimation (or more generally decision)
domain entails that randomization cannot improve an averaged estimator, compare e.g.\ \citet[(1.2.98), p.~52]{Wi:85}. This is reflected
by the fact that in both ideal and contaminated situation, $\bar M_n$ up to $\Lo(1/n^2)$ has a smaller ${\rm MSE}$ than $M'_n$---
the only difference arising in term $a_{2,0,c}$.
\end{Rem}
\begin{Rem}\rm\small
In the ideal model, as shown in \citet[Theorem~1]{C:M:S:94}, one even has the peculiarity that, in our notation
\begin{equation}\label{deter}
{\rm MSE}(\bar M_{2m},F)-{\rm MSE}(M_{2m+1},F)=-\frac{1}{16m^3 f^2_0}+\Lo(m^{-3})
\end{equation}
that is, evaluating the sample median at one more observation (from $2m$ to $2m+1$) deteriorates ${\rm MSE}$!
As our expansion already stops at $\Lo(m^{-2})$, we cannot reproduce \eqref{deter} to the given
exactitude by means of our representations \eqref{Mall} and  \eqref{Malleven}.\\
After correcting (minor) typing errors in formulae (2.2), (2.5), and (2.6) in the cited reference, we
obtain \eqref{Mall} and  \eqref{Malleven} from (2.2) again; for details refer to 
the web-page to this article.
\end{Rem}
Conditions~\eqref{contbed1} / \eqref{contbed2} almost characterize the risk-maximizing contaminations:
\begin{Prop} \label{neccond}
Under the assumptions of Theorem~\ref{medthm}, let $\delta_0$. Assume that, for $K=\sum_{i=1}^n U_i$ and $k>(1-\delta)r\sqrt{n}$,
\begin{equation} \label{condnec}
\Pr\Big(\sum_{i=1}^n U_i \Jc(X_i^{\rm \SSs di} \le \sqrt{\log(n)/n}/(2f_0))\geq 1\,\Big|K=k\Big)\ge p_0>0
\end{equation}
Then, eventually in $n$, no such sequence of contaminations $G_\flat^{(n)}\in\tilde{\cal Q}(r)$, can attain the maximal MSE in \eqref{Mall} as in condition~\eqref{contbed2}
(i.e.\ with positive bias).
More precisely,
\begin{equation}
\sup_{G^{(n)}}\,n\,[{\rm MSE}(M_n,G^{(n)})]-\,n\,[{\rm MSE}(M_n,G_\flat^{(n)})] \ge
\frac{p_0}{2nf_0\sqrt{2\pi } }+\Lo(1/n)
\end{equation}
A corresponding relation holds for condition~\eqref{contbed1}.
\end{Prop}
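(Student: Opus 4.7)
My approach is to contrast the candidate $G_\flat^{(n)}$ with an optimal contamination $G_\sharp^{(n)}$ meeting~\eqref{contbed2}, and to track the gap in $n\cdot{\rm MSE}$ that arises from the ``misplaced'' mass. The key observation is that under~\eqref{condnec}, with conditional probability at least $p_0$ given $K=\sum U_i=k$ in the dominant range $k\sim r\sqrt{n}$, at least one of the $X_i^{\rm di}$ with $U_i=1$ sits in the middle zone $(-\infty,\sqrt{\log(n)/n}/(2f_0)]$, where it acts essentially like an ideal observation rather than as a tail contaminant pushing the median to the right.

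First I would restrict attention to the dominant range $K\in[(1-\delta)r\sqrt{n},(1+\delta)r\sqrt{n}]$ for small $\delta>0$: by the Hoeffding bound used in section~\ref{modifsec}, contributions from outside this range are $\Lo(1/n)$ after multiplication by $n$, thanks to the thinning defining $\tilde{\cal Q}_n(r,\ve)$ and the moment assumption~\eqref{abkling}. Within this range I would redo the MSE expansion carried out in the proof of Theorem~\ref{medthm}, but now conditional on the additional event $J\ge 1$ that at least one contaminating observation falls in the middle zone. The asymptotically linear representation~\eqref{ALEd} then takes a modified form in which the misplaced index contributes $\sign(X_i^{\rm di})/(2nf_0)$---in fact non-positive whenever $X_i^{\rm di}\le 0$---rather than the maximal $+1/(2nf_0)$ that a far-right contamination would contribute.

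The quantitative comparison then goes as follows: on $\{J\ge 1\}$ the mean of $\sqrt{n}\,M_n$ is reduced by at least $1/(2\sqrt{n}\,f_0)$ relative to $G_\sharp^{(n)}$, and expanding $(M_n^\sharp)^2-(M_n^\flat)^2$ to leading order, using that the asymptotic density of $\sqrt{n}\,M_n$ at zero under the ideal model equals $2f_0/\sqrt{2\pi}$, produces a deficit in $\Ew M_n^2$ of order $p_0/(n^2 f_0\sqrt{2\pi})$; multiplication by $n$ yields the stated bound $p_0/(2nf_0\sqrt{2\pi})+\Lo(1/n)$. The main obstacle will be to rule out compensating terms at the same $1/n$ order from other pieces of the expansion---the coefficient $a_2$ in~\eqref{Mall} is insensitive to fine placement of contaminations within a given sign class, so no cancellation is expected, but a rigorous check requires re-running the MAPLE-aided expansion of Theorem~\ref{medthm} with an explicit indicator for the wasted contamination. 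The analogous statement for condition~\eqref{contbed1} follows by the reflection $x\mapsto -x$.
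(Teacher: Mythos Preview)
Your overall framing---compare $G_\flat^{(n)}$ with an optimal $G_\sharp^{(n)}$ and condition on the event that at least one contaminating observation is ``misplaced''---matches the paper's. But the mechanism you invoke is wrong, and the gap is not merely a matter of bookkeeping.

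The asymptotically linear representation~\eqref{ALEd} cannot detect the difference you need. Condition~\eqref{condnec} only says some $X_i^{\rm di}\le \sqrt{\log(n)/n}/(2f_0)$; it does \emph{not} force $X_i^{\rm di}\le 0$. If the misplaced observation sits in $(0,\sqrt{\log(n)/n}/(2f_0)]$, then $\sign(X_i^{\rm di})=+1$, identical to a far-right contaminant, so the first-order influence term $\psi_{\rm Med}(X_i^{\rm di})/(n)$ is unchanged. Your claimed bias shift of $1/(2\sqrt{n}f_0)$ therefore does not follow from~\eqref{condnec}. Moreover, as the introduction discusses, the $L_2$-remainder in the Bahadur--Kiefer expansion is of exact order $n^{-1/4}$, so routing the $1/n$-order deficit through~\eqref{ALEd} is already precarious; your acknowledged difficulty with ``compensating terms'' is symptomatic of this.

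The paper avoids both problems by going back to the explicit density $g_{n,j,k}$ and the integral representation~\eqref{yeq} from the proof of Theorem~\ref{medthm}. The point is that the threshold $t=\sqrt{\log(n)/n}/(2f_0)$ corresponds exactly to $y=\sqrt{\log n}$ in the substituted variable. On the event in~\eqref{condnec}, for every $y\in[\sqrt{\log n},k_2\sqrt{\log n})$ one has at least one contaminated observation below $t$, so the count variable satisfies $\tilde\jmath(t)\le(k-1)/\sqrt{n}$ rather than $k/\sqrt{n}$. The leading integrand $(y+\sqrt{2}\tilde\jmath)^2/(4f_0^2)$ then drops by an amount linear in $y/\sqrt{n}$, and integrating this against $\varphi(y)$ over $[\sqrt{\log n},k_2\sqrt{\log n})$ produces the factor $\varphi(\sqrt{\log n})=(2\pi n)^{-1/2}$, which is the source of the $1/(n\sqrt{2\pi})$ in the bound. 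No ALE heuristic or density-at-zero argument is needed; the deficit is a tail effect in the $y$-integral, computed directly.
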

With the same techniques we can also specify which parts of the MSE ---up to order $1/n^2$--- are due to variance and which are due to
bias; to this end let $M_n$ be the sample median and the midpoint estimator $\bar M_n$ for odd resp.\ 
even sample size.
\begin{Prop}\label{BiasVar}
In the situation of Theorem~\ref{medthm}, for contaminating measures $H_{n}$ as spelt
out in \eqref{contbed1}, \eqref{contbed2}, leading to $G^{(n)}_0$
in \eqref{contdef}, 
it holds
\begin{eqnarray}
\! \! &\! &    \!   n\,[{\rm Var}(M_n,G^{(n)}_0)]\;=\;\frac{1}{4 f_0^2}\,\Bigg\{\,1\,+\,\Tfrac{r}{\sqrt{n}}\Big({\Ss 2}+{\Ss |f_1|}\Big)+\nonumber\\
\! \! &\! &\!\qquad+\Tfrac{1}{n}\Big({\Ss 3r^2-(5-(-1)^n)/2}+\Tfrac{3|f_1|r^2}{f_0^2}-\Tfrac{f_2(r^2+1)}{4f_0^3}
+\Tfrac{f_1^2(8r^2+7)}{8f_0^4}
\Big)\Bigg\}+\Lo(\Tfrac{1}{n})
\\
\! \! &\! &    \! \sqrt{n}\,\,\Big|{\rm Bias}(M_n,G^{(n)}_0)\,\Big|\;=\;\frac{1}{2 f_0}\,\Bigg\{\,r\,+\,\Tfrac{1}{\sqrt{n}}\Big({\Ss r^2}-
\Tfrac{|f_1|(r^2+1)}{4f_0^2}\Big)+
\nonumber\\
\! \! &\! &\!\qquad+\Tfrac{r}{n}\Big({\Ss r^2}-\Tfrac{|f_1| \, (r^2+1)}{2f_0^2}+\Tfrac{f_2(r^2+3)}{24f_0^3}
+\Tfrac{f_1^2(r^2+3)}{8f_0^4}
\Big)\Bigg\}+\Lo(\Tfrac{1}{n})
\end{eqnarray}
\begin{eqnarray}
\! \! &\! &    \!  n\,[{\rm Bias}^2(M_n,G^{(n)}_0)]\;=\;\frac{1}{4 f_0^2}\,\Bigg\{\,r^2\,+\,\Tfrac{r}{\sqrt{n}}\Big({\Ss 2r^2}+\Tfrac{|f_1|(r^2+1)}{2}\Big)+
\nonumber\\
\! \! &\! &\!\qquad+\Tfrac{1}{n}\Big({\Ss 3r^4}+\Tfrac{3|f_1|r^2(r^2+1)}{2f_0^2}-\Tfrac{f_2r^2(r^2+3)}{12f_0^3}
+\Tfrac{f_1^2(5r^4+14r^2+1)}{16f_0^4}
\Big)\Bigg\}+\Lo(\Tfrac{1}{n})
\end{eqnarray}
\end{Prop}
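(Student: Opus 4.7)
\medskip
\noindent\textbf{Proof proposal for Proposition~\ref{BiasVar}.}
The plan is to re-run the machinery underlying Theorem~\ref{medthm}, but applied to the first moment $\Ew_{G_0^{(n)}} M_n$ instead of the second moment, and then to recover the variance from the identity $\Var = {\rm MSE} - {\rm Bias}^2$. This is economical because Theorem~\ref{medthm} has already done the hard work of expanding $\Ew_{G_0^{(n)}} M_n^2$; once we have the bias to the required order, the variance follows by subtraction without any new asymptotic analysis.

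\smallskip
\noindent First, I would condition on the number of contaminations $K=\sum_{i=1}^n U_i$ as in the proof of Theorem~\ref{medthm}. On $\tilde{\cal Q}_n(r,\ve)$ we have $K<n/2$, and the Hoeffding bound gives $P(K\ge n/2)=\Lo(n^{-\alpha})$ for every $\alpha>0$, so the expansion is driven by the summands with $k\le m$. Under the extremal contamination \eqref{contbed1} or \eqref{contbed2}, the $k$ contaminated $X_i^{\SSs\rm di}$ may be treated, up to order $\Lo(n^{-1})$, as lying at $\pm\infty$ with a definite sign determined by $\sign(f_1)$ (or either sign if $f_1=0$). Consequently, conditional on $\{K=k\}$, the median $M_n$ reduces to the appropriate order statistic (or half-sum of order statistics, for $\bar M_n$) of the remaining $n-k$ ideal observations. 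I would write the corresponding order-statistic density
\begin{equation*}
\beta_{j,n-k}(x)=\binom{n-k}{j}\,j\,F(x)^{j-1}(1-F(x))^{n-k-j}f(x)
\end{equation*}
and compute $\Ew M_n\mid K=k$ by integrating $x\,\beta_{j,n-k}(x)$ after substituting the Taylor expansion \eqref{tayl1} for $f$ and the induced expansion for $F$ near $0$. A Laplace/saddlepoint expansion in the binomial weight $\binom{n-k}{j}F^{j-1}(1-F)^{n-k-j}$ around the saddle $F(x)\approx j/(n-k)$ produces the powers of $n^{-1/2}$.

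\smallskip
\noindent The second step is to average over $k$ using $P(K=k)=\binom{n}{k}(r/\sqrt n)^k(1-r/\sqrt n)^{n-k}$. Expanding this binomial weight for $k$ of order $r\sqrt n$, and combining it with the conditional expansion, yields a double series in $n^{-1/2}$ whose coefficients are polynomials in $r$, $f_1/f_0^2$, $f_2/f_0^3$, and $f_1^2/f_0^4$. Collecting terms gives the bias expansion claimed. Squaring the bias (keeping terms through $1/n$ in $n\cdot{\rm Bias}^2$) is elementary but must be done carefully because the leading $r/(2f_0)$ interacts with the $1/\sqrt{n}$ correction at order $1/n$. Finally, $n\cdot\Var(M_n,G_0^{(n)})= n\cdot{\rm MSE}-n\cdot{\rm Bias}^2$ with $n\cdot{\rm MSE}$ taken from \eqref{Mall}/Remark~\ref{oddnot} (odd $n$) and from Proposition~\ref{aijspec} with $a_{2,0,c}(\bar M_n)=-3$ (even $n$); this accounts for the parity-dependent term $(5-(-1)^n)/2$ in the variance expansion.

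\smallskip
\noindent The main obstacle I anticipate is bookkeeping rather than conceptual: controlling the Taylor remainders of $f$ uniformly across the range of the order statistic (they are concentrated in an $\LO(\sqrt{\log n/n})$ window around $0$, so condition \eqref{tayl1} with $\delta_0>0$ suffices), and managing the simultaneous expansion of the binomial weights in both $k$ (around $k\sim r\sqrt n$) and $j$ (around $j\sim (n-k)/2$). As already noted by the author for Theorem~\ref{medthm}, this is best handled symbolically in \texttt{MAPLE}. A built-in consistency check is that the sum $n\cdot\Var+n\cdot{\rm Bias}^2$ must agree with the coefficients $a_1,a_2$ in \eqref{Malla1}--\eqref{Malla2} (resp.\ their even-$n$ counterparts in Proposition~\ref{aijspec}); verifying this coincidence confirms both the bias expansion and, a posteriori, the variance expansion.
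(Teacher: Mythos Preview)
Your proposal is correct and matches the paper's approach. The paper does not give a separate proof for this proposition; it simply states ``With the same techniques we can also specify which parts of the MSE \ldots\ are due to variance and which are due to bias,'' and your plan---rerun the machinery of Theorem~\ref{medthm} with $t$ in place of $t^2$ in \eqref{nMSE} to get the bias expansion, square it, and subtract from the already established MSE expansion to obtain the variance---is exactly that.
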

We next specialize Theorem~\ref{medthm} and Proposition~\ref{medeventhm} for the case 
of $F={\cal N}(0,1)$ for later comparison to numeric and simulated values.
%
\begin{Cor}\label{medthmnormal}
In the location model about $F={\cal N}(0,1)$,
\begin{eqnarray}
\!\sup_{G^{(n)}}\,n\,[{\rm MSE}(M_n,G^{(n)})]=
\frac{\pi}{2}\Bigg\{
(1+r^2)+\Tfrac{r}{\sqrt{n}}a_{1,0}+\Tfrac{1}{n}\Big(a_{2,0}+2\pi\,a_{2,2}\Big)\Bigg\}+\Lo(\Tfrac{1}{n})
\end{eqnarray}
\end{Cor}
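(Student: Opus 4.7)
The corollary is a pure specialization of Theorem~\ref{medthm} (for odd sample size) and of Proposition~\ref{medeventhm} together with the explicit coefficient tables in Proposition~\ref{aijspec} (for even sample size) to the choice $F=\mathcal{N}(0,1)$. My plan is therefore to verify that the structural hypotheses of those results hold for the standard normal, read off the values $f_0$, $f_1$, $f_2$ at the ideal median, and substitute these values into \eqref{Mall} resp.\ \eqref{Malleven}. No new analytic input is required.

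Verification of the hypotheses is immediate: $F(0)=1/2$ holds by symmetry, $f$ is real analytic so the Taylor expansion \eqref{tayl1} is available to any order, all absolute moments are finite so \eqref{abkling} holds for every $\delta>0$, and the Fisher information of location equals one. Differentiating $f(x)=(2\pi)^{-1/2}e^{-x^2/2}$ at the origin gives
\begin{equation*}
f_0=\frac{1}{\sqrt{2\pi}},\qquad f_1=0,\qquad f_2=-\frac{1}{\sqrt{2\pi}},
\end{equation*}
so that the prefactor becomes $1/(4f_0^2)=\pi/2$ and $|f_2/f_0^3|=2\pi$, while $f_1/f_0^2$ and $f_1^2/f_0^4$ vanish identically.

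Substitution now collapses the general expansion: the coefficients $a_{1,1}$, $a_{2,1}$ and $a_{2,3}$ all multiply powers of $f_1$ and drop out, leaving only the contributions from $a_{1,0}$, $a_{2,0}$, and $a_{2,2}$ weighted by $|f_2/f_0^3|=2\pi$. Pulling the common factor $\pi/2$ in front produces exactly the claimed display. The statement in Theorem~\ref{medthm}(b) that in the case $f_1=0$ either contamination pattern \eqref{contbed1} or \eqref{contbed2} attains the supremum remains intact; by the symmetry of the normal, both patterns yield the same maximal MSE, so the sup in the corollary is well defined.

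The only real obstacle is bookkeeping. One must track the sign conventions carefully when absorbing $f_2/f_0^3=-2\pi$ into the coefficient that appears as ``$2\pi\,a_{2,2}$'' in the final display, and one must confirm that the $\sign(f_1)$-dependent pieces of Proposition~\ref{aijspec} (i.e.\ $a_{1,1}$, $a_{2,1,r}$, and $a_{2,1,c}$ for the order-statistic variants) are consistently interpreted at $f_1=0$, i.e.\ as vanishing. With these conventions pinned down, the whole argument reduces to a single line of arithmetic applied to the previously established expansions.
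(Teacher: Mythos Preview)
Your approach is correct and is exactly the paper's (implicit) derivation: the corollary is an immediate specialization of Theorem~\ref{medthm} and Proposition~\ref{medeventhm} via $f_0=1/\sqrt{2\pi}$, $f_1=0$, $f_2=-1/\sqrt{2\pi}$, so $1/(4f_0^2)=\pi/2$ and all $f_1$-terms drop out. Your flagged sign issue is in fact real: since $f_2/f_0^3=-2\pi$, the substitution gives $a_{2,2}\,f_2/f_0^3=-2\pi\,a_{2,2}$, and cross-checking against Corollary~\ref{medthmnormid} (at $r=0$ the $1/n$-term is $\pi/2+a_{2,0,c}=-2\pi\cdot(-\tfrac14)+a_{2,0,c}$) confirms that the display should read $-2\pi\,a_{2,2}$ rather than $+2\pi\,a_{2,2}$.
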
%
\begin{Cor}\label{medthmnormid}
In the location model about $F={\cal N}(0,1)$, in the ideal model
\begin{equation}\label{mednormideq}
n\,{\rm MSE}({\rm Med}_n,F)= \frac{\pi}{2}[1+(\frac{\pi}{2}+a_{2,0,c})/n]+\Lo(\Tfrac{1}{n}),
\end{equation}
As numerical evaluation of \eqref{mednormideq}, we get in the three cases:
\begin{equation}\label{aseq1}
n\,{\rm MSE}({\rm Med}_n,F)\doteq \Lo(\Tfrac{1}{n})+\left\{\!\begin{array}{lcl}
1.5708(1-0.4292/n)&\!&\!{\Ss \mbox{\scriptsize for }{\rm Med}_n,\,M_n''}\\
1.5708(1+0.5708/n)&\!&\!{\Ss \mbox{\scriptsize for }X_{[m:n]},\, X_{[(m+1):n]},\, M'_n}\\
1.5708(1-1.4292/n)&\!&\!{\Ss \mbox{\scriptsize for }\bar M_n}
\end{array}\right.
\end{equation}
\end{Cor}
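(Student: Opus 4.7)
The plan is to derive Corollary~\ref{medthmnormid} by specializing Theorem~\ref{medthm} and Proposition~\ref{medeventhm} to the ideal case $r=0$ with $F={\cal N}(0,1)$. Since the heavy analytic work is already packaged in those two results, what remains is bookkeeping: evaluating the expansion coefficients at the normal density and stripping the $r$-dependent parts.

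First I would record the density values at the median: $f_0 = 1/\sqrt{2\pi}$, $f_1 = 0$ by symmetry, and $f_2 = -1/\sqrt{2\pi} = -f_0$. These give the two key normalizations $1/(4f_0^2) = \pi/2$ and $f_2/f_0^3 = -1/f_0^2 = -2\pi$. The vanishing of $f_1$ immediately kills every contribution to \eqref{Mall} and \eqref{Malleven} involving $f_1/f_0^2$ or $f_1^2/f_0^4$, so the $a_{2,3}$ term and all $a_{i,1,\cdot}$ terms drop out regardless of the variant.

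Next I would specialize to $r=0$. Inspection of Proposition~\ref{aijspec} and Remark~\ref{oddnot} shows that every $a_{1,*}$ coefficient and every subscripted $a_{2,*,r}$ coefficient is a polynomial in $r$ without constant term, hence vanishes. The only surviving pieces are the variant-dependent $a_{2,0,c}$ and the universal $a_{2,2,c} = -1/4$. Substituting,
\begin{equation*}
a_{2,2,c}\,\frac{f_2}{f_0^3} \;=\; \Bigl(-\tfrac{1}{4}\Bigr)(-2\pi) \;=\; \tfrac{\pi}{2},
\end{equation*}
and the expansion collapses to
\begin{equation*}
n\,{\rm MSE}(M_n,F) \;=\; \tfrac{\pi}{2}\Bigl[\,1 + \tfrac{1}{n}\bigl(a_{2,0,c} + \tfrac{\pi}{2}\bigr)\Bigr] + \Lo(1/n),
\end{equation*}
which is exactly \eqref{mednormideq}.

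For the three numerical lines of \eqref{aseq1} I would simply plug in the relevant $a_{2,0,c}$: from Remark~\ref{oddnot} and Proposition~\ref{aijspec}(b), $a_{2,0,c} = -2$ for ${\rm Med}_n$ and $M''_n$, giving $\pi/2 - 2 \doteq -0.4292$; $a_{2,0,c} = -1$ for $X_{[m:n]}$, $X_{[(m+1):n]}$, and $M'_n$, giving $\pi/2 - 1 \doteq 0.5708$; and $a_{2,0,c} = -3$ for $\bar M_n$, giving $\pi/2 - 3 \doteq -1.4292$. Grouping variants with a common value yields the table. There is no real obstacle here; the only point worth double-checking is that at $r=0$ the dichotomy between the maximizing contaminations \eqref{contbed1} and \eqref{contbed2} is vacuous, since no contaminating mass is actually placed, so the $r\to 0$ limit of the maximal MSE coincides with the ideal MSE unambiguously.
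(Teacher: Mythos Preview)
Your approach is exactly the paper's: the corollary is stated as a direct specialization of Theorem~\ref{medthm} and Proposition~\ref{medeventhm} to $r=0$ and $F=\mathcal{N}(0,1)$, with no separate proof given, and your bookkeeping reproduces \eqref{mednormideq} and the numerical table correctly. One small slip in your write-up: the $a_{1,\ast}$ coefficients are \emph{not} polynomials in $r$ without constant term (e.g.\ $a_{1,0}=2(1+r^2)$ has constant term $2$); the first-order correction vanishes at $r=0$ because of the explicit prefactor $r/\sqrt{n}$ in the expansion, not because the $a_{1,\ast}$ themselves vanish.
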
%
This means: We overestimate ${\rm MSE}({\rm Med}_n,F)$ by the first order asymptotics
for odd sample size $n$ and with estimator $M''_n$, and to an even
higher degree, if we use $\bar M_n$. The risk of estimators $X_{[m:n]}$, $X_{[(m+1):n]}$, $M'_n$ however
is  underestimated.
\section{Illustration of the results}\label{illustrsec}\req
To illustrate the approximation, we consider the case of $F={\cal N}(0,1)$ with a number of
numerical evaluations and a small simulation study.
\subsection{Numerical Results in the ideal model}\label{numres}
In the ideal model, we have evaluated the integrals numerically, using formulas for the densities in the ideal model
to be derived later in section~\ref{proofsec}: $g_{n}$ for
 the sample median for odd sample size from \eqref{mediddens} and $g_{n}$ for the midpoint estimator for even sample size
from \eqref{denseevenid}.  For the numerical
calculations, we have used {\tt R 2.11.0}. Note that
the limit up to five digits in this case is $1.5708$.
%
Further sample sizes are available on 
the web-page to this article.
\begin{table}
\small
\begin{center}
\begin{tabular}{r||r|r|r|r|r}
                                                 &
\multicolumn{1}{c|}{num.}                          &
\multicolumn{4}{c}{error of asymptotics} \\[0.2ex]
$n$&\multicolumn{1}{c|}{exact}&\multicolumn{2}{c|}{1st/2nd order} &
\multicolumn{2}{c}{3rd order}\\
&\multicolumn{1}{c|}{${\rm Var}^{\rm\SSs id}_n$}& \multicolumn{1}{c|}{absolute} & \multicolumn{1}{c|}{relative} & \multicolumn{1}{c|}{absolute} & \multicolumn{1}{c}{relative} \\
\hline 
\multicolumn{6}{c}{$\boldmath M_n$}\\[0.5ex]
$5    $     &  $    1.4341$   &    $  1.366\,{\rm \Ss E}-1 $    &  $  9.527\,\% $  &    $    1.790\,{\rm \Ss E}-3$     & $ 0.125\,\% $\\
$11   $     &  $    1.5088$   &    $  6.201\,{\rm \Ss E}-2 $    &  $  4.110\,\% $  &    $    7.194\,{\rm \Ss E}-4$     & $ 0.048\,\% $\\
$101  $     &  $    1.5641$   &    $  6.687\,{\rm \Ss E}-3 $    &  $  0.428\,\% $  &    $    1.174\,{\rm \Ss E}-5$     & $ 0.001\,\% $\\[0.5ex]
%
\multicolumn{6}{c}{$\boldmath X_{[n/2:n]}$, $\boldmath X_{[(n/2+1):n]}$, $\boldmath M'_n$}\\[0.5ex]
%
$6      $     &  $    1.7210$   &    $  -1.502\,{\rm \Ss E}-1 $    &  $-8.728\,\%$  &    $-7.715\,{\rm \Ss E}-4$     & $-0.044\,\% $\\
$10     $     &  $    1.6610$   &    $  -9.022\,{\rm \Ss E}-2 $    &  $-5.431\,\%$  &    $-5.560\,{\rm \Ss E}-4$     & $-0.033\,\% $\\
$100    $     &  $    1.5798$   &    $  -8.976\,{\rm \Ss E}-3 $    &  $-0.568\,\%$  &    $-9.445\,{\rm \Ss E}-6$     & $-0.001\,\% $\\[0.5ex]
%
%
\multicolumn{6}{c}{$\boldmath M''_n$}\\[0.5ex]
$6     $    &  $    1.4776$   
&    $  9.320\,{\rm \Ss E}-2 \!$    &  $ 6.307\,\% $  &    $    -1.917\,{\rm \Ss E}-2$     & $ -1.297\,\%$\\
$10     $    &  $    1.5106$   
&    $  6.019\,{\rm \Ss E}-2 \!$    &  $ 3.984\,\% $  &    $    -7.233\,{\rm \Ss E}-3$     & $ -0.479\,\% $\\
$100    $    &  $    1.5641$   
&    $  6.665\,{\rm \Ss E}-3 \!$    &  $ 0.426\,\% $  &    $    -7.681\,{\rm \Ss E}-5$     & $ -0.005\,\% $\\[0.5ex]
%
\multicolumn{6}{c}{$\boldmath \bar M_n$}\\[0.5ex]
$6      $    &  $    1.2884$   & $-2.823\,{\rm \Ss E}-1   $&    $-21.913\,\% $    & $-9.182\,{\rm \Ss E}-2 $  &  $ -7.126\,\% $\\
$10     $    &  $    1.3832$   & $-1.875\,{\rm \Ss E}-1   $&    $-13.557\,\% $    & $-3.697\,{\rm \Ss E}-2 $  &  $ -2.672\,\% $\\
$100    $    &  $    1.5488$   & $-2.200\,{\rm \Ss E}-2   $&    $ -1.421\,\% $    & $-4.472\,{\rm \Ss E}-4 $  &  $ -0.029\,\% $\\
\end{tabular}
\caption{\label{tabel1} Accuracy of the asymptotics in the ideal model}%
\end{center}
\end{table}
\subsection{A simulation study}\label{simstudy}
\subsubsection{Simulation design}
Under {\tt R 2.11.0}, compare \citet{RMANUAL}, we simulated $M=10000$ runs of sample size
$n=5,10,30,100$
in the ideal location model ${\cal P}={\cal N}(\theta,1)$ at $\theta=0$.
In a contaminated situation, we used observations stemming from
$$
G_s^{(n)}={\cal L}\{[(1-U_i)X_i^{\SSs \rm id}+U_i X_i^{\SSs \rm cont}]_i\,\Big|\,\sum U_i\leq \ulcorner n/2 \urcorner -1\,\}
$$
for $U_i\iid {\rm Bin}(1,r/\sqrt{n})$, $X_i^{\SSs \rm id}\iid{\cal N}(0,1)$, $X_i^{\SSs \rm cont}\iid \Jc_{\{100\}}$
all stochastically independent and for contamination radii
$r=0.1, 0.5, 1.0$. Further results for $n=30,50$ and/or $r=0.25,0.5$ are available on 
the web-page to this article.
With respect to Remark~\ref{remmain}~\eqref{rema},
the contamination point $100$ will largely suffice to attain the maximal MSE on $\tilde {\cal Q}_n$.
\subsubsection{Results}
The simulated results for $n\,{\rm MSE}({\rm Med}_n, G^{(n)}_s)$ come with an asymptotic $95\%$--confidence interval,
which is based on the CLT for the variable \begin{equation}
\overline{\rm empMSE}_n=\Tfrac{n}{10000}\sum\nolimits_j [{\rm Med}_n({\rm sample}_j)]^2
\end{equation}
We compare these results to the
corresponding numerical ``exact'' values and to the asymptotical values for approximation order
$n^0$, $n^{-1/2}$, $n^{-1}$ respectively. For even $n$ we take the midpoint--estimator which is the
default procedure in {\tt R}. For the numerical evaluations we use density formulas from section~\ref{proofsec}: $g_{n,k,k}$ for
  odd sample size from \eqref{denseodd} and the integrand from \eqref{denseeven} for even sample size.\\
{\footnotesize  For the ideal situation we had simulation results available for all runs to $r\not=0$,
so the actual sample size for $r=0$ is $40000$.}
\smallskip\\
\begin{table}
\small
\begin{center}
\begin{tabular}{l|r||r@{\hspace{0.5em}\hfill$[$}c@{;}c@{$\,]\;$}|r|rrr}
  \multicolumn{1}{c|}{n}&  \multicolumn{1}{c||}{r}&  \multicolumn{1}{c}{sim}& \multicolumn{1}{c}{[low;}&  \multicolumn{1}{c|}{up]}&\multicolumn{1}{c}{num}&  \multicolumn{1}{c}{$n^0$}&  \multicolumn{1}{c}{$n^{-1/2}$}&
    \multicolumn{1}{c}{$n^{-1}$}\\ \hline
$n=5$& $0.00$&  $1.423$& $1.384$& $1.464$& $ 1.434$&  $ 1.571$&  $ 1.571$&  $ 1.436$\\
     & $0.10$&  $1.652$& $1.602$& $1.701$& $ 1.671$&  $ 1.587$&  $ 1.728$&  $ 1.613$\\
     & $0.50$&  $3.014$& $2.917$& $3.111$& $ 3.045$&  $ 1.963$&  $ 2.842$&  $ 3.258$\\
     & $1.00$&  $4.525$& $4.394$& $4.655$& $ 4.509$&  $ 3.142$&  $ 5.952$&  $ 8.853$\\
\hline
$n=10$&  $0.00$&  $  1.371$&  $1.333$& $1.410$& $ 1.383$&  $ 1.571$&  $ 1.571$&  $ 1.346$\\
      &  $0.10$&  $  1.534$&  $1.491$& $1.578$& $ 1.521$&  $ 1.587$&  $ 1.687$&  $ 1.472$\\
      &  $0.50$&  $  2.980$&  $2.882$& $3.078$& $ 2.916$&  $ 1.963$&  $ 2.584$&  $ 2.636$\\
      &  $1.00$&  $  5.723$&  $5.568$& $5.879$& $ 5.735$&  $ 3.142$&  $ 5.129$&  $ 6.422$\\
      \hline
$n=30$&  $0.00$&  $  1.518$&  $1.476$& $1.560$& $ 1.501$&  $ 1.571$&  $ 1.571$&  $ 1.496$\\
      &  $0.10$&  $  1.614$&  $1.569$& $1.659$& $ 1.579$&  $ 1.587$&  $ 1.644$&  $ 1.573$\\
      &  $0.50$&  $  2.400$&  $2.331$& $2.469$& $ 2.390$&  $ 1.963$&  $ 2.322$&  $ 2.339$\\
      &  $1.00$&  $  5.391$&  $5.245$& $5.538$& $ 5.255$&  $ 3.142$&  $ 4.289$&  $ 4.720$\\
       \hline
$n=100$&  $0.00$&  $  1.546$&  $1.503$& $1.589$& $ 1.549$&  $ 1.571$&  $ 1.571$&  $ 1.548$\\
      &   $0.10$&  $  1.585$&  $1.541$& $1.629$& $ 1.597$&  $ 1.587$&  $ 1.618$&  $ 1.597$\\
      &   $0.50$&  $  2.165$&  $2.106$& $2.223$& $ 2.171$&  $ 1.963$&  $ 2.160$&  $ 2.165$\\
      &   $1.00$&  $  4.010$&  $3.911$& $4.108$& $ 3.952$&  $ 3.142$&  $ 3.770$&  $ 3.899$
\end{tabular}
\caption{\label{tabel2} Asymptotics compared to numerical and simulational evaluations}%
\end{center}
\end{table}
\begin{figure}
  \begin{center}
    \includegraphics[width=13cm,height=11.5cm]{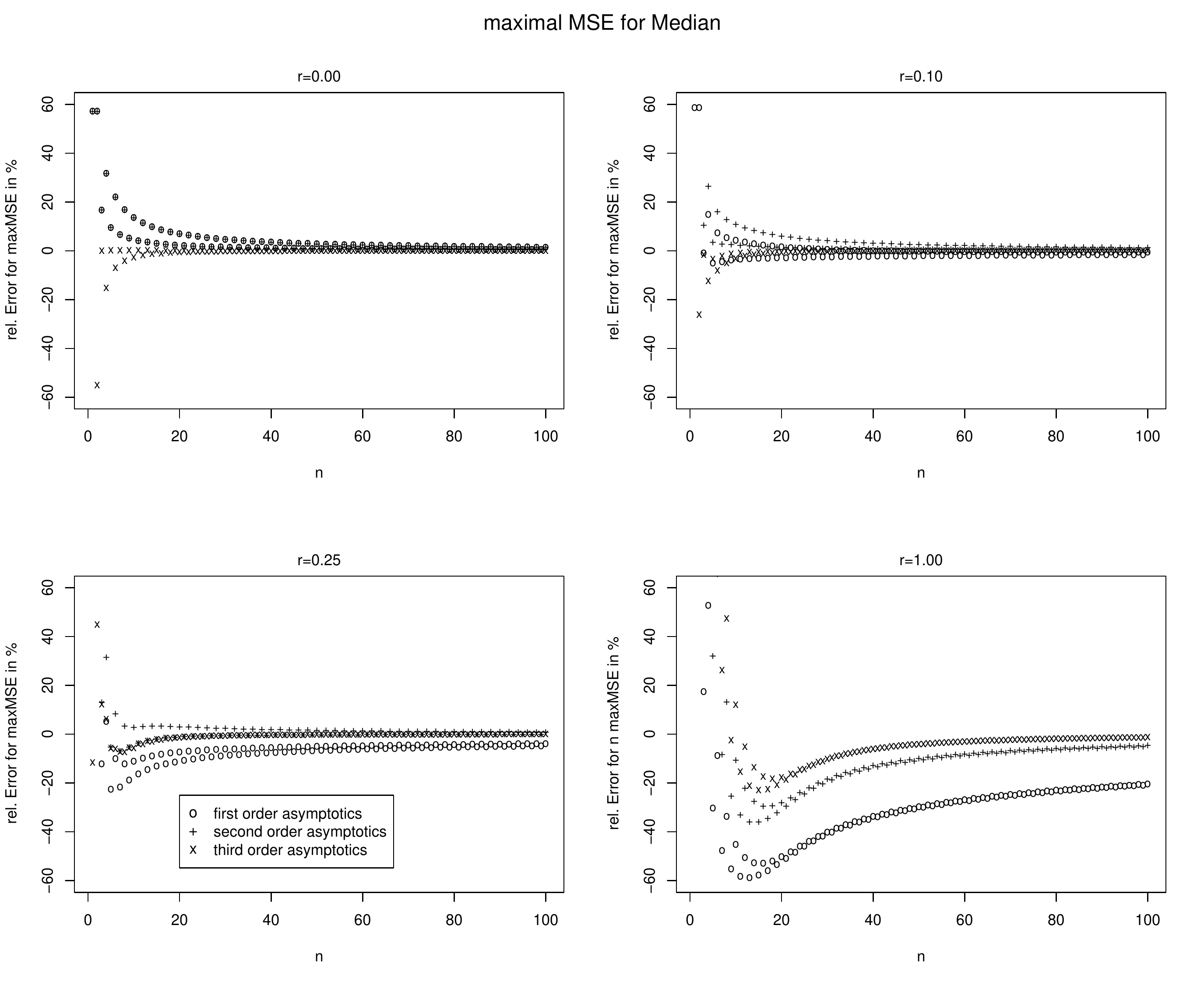}
    \caption{\label{fig1}{\small The mapping $n\mapsto {\rm rel.error}({\rm maxMSE}({\rm Med}_n))$
    for $F={\cal N}(0,1)$.}}
  \end{center}
\end{figure}
%
\begin{table}
\begin{center}
\small
\begin{tabular}{l|l||r|r|r|r|r}
${\rm rel.err}$& order&  $r=0.00$& $r=0.10$& $r=0.25$& $r=0.50$& $r=1.00$\\
\hline
1\%&1st order asy.& 143&  320&    2449&  10016& 40127\\
&2nd order asy.& 143&  133&      85&    124& 479\\
&3rd order asy.&  17&   17&      25&     48 &124\\
\hline
5\%&1st order asy.&   29&   9&    92&     406&    1629\\
&2nd order asy.&   29&  25&    10&      30&    101\\
&3rd order asy.&    7&   9&    11&      20&    46\\
\end{tabular}
\caption{\label{tabel2n} \small Minimal $n_0$ s.t.\ for $n\ge n_0$ the relative error using first to third order asymptotics
for approximating ${\rm maxMSE}({\rm Med}_n)$ on $\tilde {\cal Q}_n(r,\ve)$ is smaller than $1\%$ resp.\ $5\%$}%
\end{center}
\end{table}
\subsection{Discussion}\label{disksimstudy}
The numerical results of subsection~\ref{numres} show an excellent approximation quality of our formulas \eqref{Mall}
and \eqref{Malleven} with specifications \eqref{erstspec} to \eqref{letztspec} in the ideal model. In particular
the different under/over--estimation properties of the different median variants are closely reflected by the
numerical results.  The approximation quality of the midpoint estimator indicated in \eqref{aseq1}
is somewhat less well supported by the numerical results, which is probably due to the fact, that by iterated numerical integration the accuracy
of the numerical approximation will be inferior to the other variants.\\
In the contaminated situation, empirical and numerical results also strongly support our assertion of a good
approximation quality down to moderate to very small sample sizes, as long as the contamination radius $r$ is not too large:
For $n=5$ upto radius $r=0.1$, for $n=10$ (almost) upto $r=0.25$, for $n>30$ upto $r=0.5$, all approximations up to $\Lo(n^{-1})$--terms
stay within an (empirical) $95\%$--confidence interval around the (empirical) MSE (multiplied by $n$).\\
In any case, higher order asymptotics yield more accurate approximations than first order ones, and upto case $n=5$, the $1/n$--terms
improve the approximation with respect to the $1/n^{1/2}$--terms.\\
A closer look is provided by figure~\ref{fig1} (and, zooming in for $n\ge 16$, there is an additional figure on the web-page). Indeed for all investigated radii $r=0, 0.10, 0.25, 1.00$,
the relative error of our asymptotic formula w.r.t.\ the corresponding numeric figures is quickly decreasing in absolute value in $n$;
also, we notice  a certain oscillation between odd and even sample sizes induced by the different definitions of the sample median
in these cases. In table~\ref{tabel2n}, we have determined the smallest sample size $n_0$ such that for $n\ge n_0$ the relative error using first to third order asymptotics
for approximating ${\rm maxMSE}({\rm Med}_n)$ on $\tilde {\cal Q}_n(r)$ is smaller than $1\%$ resp.\ $5\%$ which shows that for
$r\leq 0.5$ we need no more than $20$ $(50)$ observations to stay within an error corridor of $5\%$ ($1\%$) in third order asymptotics.
For first order asymptotics, however  we need considerable sample sizes for reasonable approximations unless the radius is rather small.
\appendix
\section{Proofs}\label{proofsec}\req
\subsection{Proof of Remark~\ref{slowdecay}(a)}\label{slowdecayp}
Let $n=2m+1$ and $\gamma\in(0,1)$. Necessity: With $\bar F=1-F$, by integration by parts and H\"older inequality to exponent $m+1$,
we obtain that for any $T>0$, and some constants $K',K>0$ and $\alpha=\frac{\gamma-1}{m+1}$
\begin{eqnarray*}
&&\!\Ew_F|{\rm Med}_n|^\gamma=n { 2m\choose m} \int|x|^\gamma \bar F(x)^m F(x)^m\,F(dx)\ge \\
&\!\ge\!&\! K\max\Big(\int_T^\infty \!x^{\gamma-1} F(-x)^{m+1}\, dx, \int_T^\infty \!x^{\gamma-1} \bar F(x)^{m+1}\, dx\Big)\ge
K' \Big(\int_{\SSs \{|x|>T\}}\! \!|x|^{\alpha}\, F(dx) \Big)^{m+1}
\end{eqnarray*}
Sufficiency: Under condition~\eqref{abkling}, for $g_\delta(t)=|t|^\delta F(t)\bar F(t)$ and $\hat g_\delta:=\sup_t g_\delta(t)$
it holds ---cf.\  \citet[(2.37)]{Jur:82}
\begin{equation} \label{Jurec}
\hat g_\delta< \infty,\quad \lim_{|t|\to\infty} g_\delta(t)=0,\quad I_b:=\int [F(t)\bar F(t)]^b\,dt <\infty\;\;\forall b\ge 1/\delta
\end{equation}
Hence for any $n>1+2\gamma/\delta$, it follows $b=m+(1-\gamma)/\delta>1/\delta$ and hence
\begin{eqnarray*}
\!&&\!\Ew_F|{\rm Med}_n|^\gamma= n\gamma  { 2m\choose m} \int_0^\infty x^{\gamma-1} [\bar F^{m+1}(x) F^m(x)+\bar F^{m}(-x) F^{m+1}(-x)]\,dx\le \\
\!&\!\leq \!& \!n\gamma  { 2m\choose m}  \hat g_\delta^{(\gamma-1)/\delta} \int_{-\infty}^\infty [\bar F(x) F(x)]^{b} \,dx\leq
n\gamma  { 2m\choose m}  \hat g_\delta^{(\gamma-1)/\delta} I_{b} <\infty
\end{eqnarray*}
The arguments for even sample size are similar.
\hfill\qed
\subsection{Proof of Proposition~\ref{badmed}}
The assertion for uniform normality follows along the lines of  \citet[Theorem~6.2.8]{Ri:94}:
Although the assumed uniform Lipschitz continuity of the scores $\psi$---(68), p.~231 in the cited reference---fails,
a look into the proof of the theorem shows that this condition only is needed to achieve
 conclusion $dL(\theta)=\EM_k$ on p.~235, which in our  situation is the case anyway.\\
Assertion \eqref{>C} is shown by a breakdown-point argument:
We interpret $G^{(n)}$ according to \eqref{Uidef}, where for this proof $X_i^{\SSs \rm di}\iid \Jc_{\{x_0\}}$.
We observe that ${\rm Med}_n\geq x_0$ surely under $G^{(n)}$ as soon as  $K=\sum U_i$, the number of observations stemming from $\Jc_{\{x_0\}}$,
is larger than $m$. But, $K$ being a binomial variable, the event $\{K>m\}$ carries positive probability $p_n$.
So setting
$x_0:=\sqrt{
{C}/{p_n}\,}$,
we get\\
$
{\rm MSE}({\rm Med}_{n},{G^{(n)}_{0}})=\Ew_{G^{(n)}_{0}}({\rm Med}_n^2)\geq
\Ew_{G^{(n)}_{0}}({\rm Med}_n^2\Jc_{\{K>m\}})\geq x_0^2 p_n=C
$
\hfill\qed

\subsection{Outline of the proof of Theorem~\ref{medthm}}
As in the theorem we define $n=2m+1$ and first consider the situation knowing
that exactly $K=\sum U_i=k$ observations have been contaminated,
to values say $\tilde x_1,\ldots, \tilde x_k$. More specifically, it will
 be sufficient to consider---for each fixed $t$---the number
\begin{equation}
  j=j_k(t):=\#\{\tilde x_i: \tilde x_i\geq t\}
\end{equation}
In this situation we will derive the (conditional) probability that the (unique) median ${\rm Med}_n$
is not larger than $t$ and derive its density. We then fix some $k_1>1$ and $k_2>\sqrt{5/2}$ and
split up the proof according to the following tableau
\begin{center}
\begin{tabular}{c||c|c|c}
&$K\leq k_1  r\sqrt{n}$ &  $k_1r\sqrt{n}< K\leq \rho\,n $ & $K> \rho\, n$\\
\hline\hline
$|t|< k_2\sqrt{\log(n)/n}/f_0$& (I) &&\\
\cline{1-2}
$k_2\sqrt{\log(n)/n}/f_0 \leq |t|\leq n^2 $& (II)& \raisebox{1.5ex}[-1.5ex]{(III)}& {\footnotesize excluded} \\
\cline{1-3}
$|t|\geq n^2$&  \multicolumn{2}{c|}{(IV)}& \\
\hline
\end{tabular}
\end{center}
%

%
For cases (II) to (IV), we will show that they contribute only terms of order $\Lo(n^{-1})$ to $n\,{\rm MSE}({\rm Med}_n)$
and hence can be neglected. Applying Taylor expansions at large, we derive an expression in which it becomes clear, that independently from $t$
and eventually in $n$, the maximal MSE is attained for $j_k(t)$ either identically $k$ or identically $0$ for
all $t$ in (I)---or equivalently all $\tilde x_i$ are either smaller than $-\Tfrac{k_2}{f_0}\sqrt{\log(n)/n}$
 or larger than $\Tfrac{k_2}{f_0}\sqrt{\log(n)/n}$.
Integrating out first $t$ and then $k$ we obtain the result.
\subsection[Law of the Median in ideal and contaminated situation]{\boldmath${\cal L}({\rm Med}_n)$ in ideal and contaminated situation}
\subsubsection{Ideal Situation}
\begin{Lem}
Let $X_i\iid P$ real-valued random variables.
Then
\begin{equation}\label{VertMede}
P(X_{[k:n]}\leq t)=\sum_{l=k} {{n}\choose{l}}  P(t)^l (1-P(t))^{n-l}
\end{equation}
If $dP=p\,\,d\lambda$, then $X_{[k:n]}$ has density
\begin{equation}\label{Xkiddens}
  g(t)=n p(t) {{n-1}\choose{k-1}}  P(t)^{k-1} (1-P(t))^{n-k}
\end{equation}
In particular the density of the sample median for odd sample size $n=2m+1$
in the situation of Theorem~\ref{medthm} is
\begin{equation}\label{mediddens}
  g_n(t)=n f(t) {{2m}\choose{m}}  F(t)^{m} (1-F(t))^{m}
\end{equation}
\end{Lem}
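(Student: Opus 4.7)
The lemma is a textbook statement about order statistics, so the plan is just to spell out the two standard derivations cleanly and then substitute to obtain the median density.

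First I would establish the distribution function \eqref{VertMede} combinatorially. The event $\{X_{[k:n]}\le t\}$ is identical to $\{\#\{i\le n: X_i\le t\}\ge k\}$, since the $k$-th smallest observation lies at or below $t$ precisely when at least $k$ of the $X_i$ do so. The count $N_t:=\#\{i\le n:X_i\le t\}$ is a sum of $n$ i.i.d.\ Bernoulli indicators $\EM\{X_i\le t\}$ with success probability $P(t)$, hence binomial with parameters $(n,P(t))$. Summing the binomial point masses from $l=k$ to $l=n$ yields \eqref{VertMede}.

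Next I would derive the density \eqref{Xkiddens}. The quickest route is direct differentiation of \eqref{VertMede} with respect to $t$ using $P'(t)=p(t)$: a telescoping cancellation between consecutive binomial terms leaves only the $l=k$ boundary term, which equals $n\,p(t)\binom{n-1}{k-1}P(t)^{k-1}(1-P(t))^{n-k}$. As a cross-check, I would mention the heuristic argument: $X_{[k:n]}\in(t,t+dt)$ requires one observation in the infinitesimal interval (with probability $p(t)\,dt$), $k-1$ observations below $t$, and $n-k$ above; the multinomial count of labelings is $n!/[(k-1)!\,1!\,(n-k)!]=n\binom{n-1}{k-1}$, giving the same expression.

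Finally, the specialization in \eqref{mediddens} is obtained by setting $k=m+1$ and $n=2m+1$ in \eqref{Xkiddens}: the binomial coefficient becomes $\binom{2m}{m}$ and the powers $P(t)^{k-1}(1-P(t))^{n-k}$ become $F(t)^m(1-F(t))^m$ since $P=F$ in the location model of Theorem~\ref{medthm}. There is no real obstacle here; the only care needed is the cancellation step when differentiating the sum in \eqref{VertMede}, which can be verified either by termwise computation or by invoking the standard identity $\frac{d}{du}\sum_{l=k}^n\binom{n}{l}u^l(1-u)^{n-l}=n\binom{n-1}{k-1}u^{k-1}(1-u)^{n-k}$ and applying the chain rule to $u=P(t)$.
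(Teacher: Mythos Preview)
Your proposal is correct and follows essentially the same approach as the paper: identify $\{X_{[k:n]}\le t\}$ with the event that the sum of the Bernoulli indicators $\Jc_{\{X_i\le t\}}$ is at least $k$, read off the binomial tail to get \eqref{VertMede}, differentiate to obtain \eqref{Xkiddens}, and specialize $k=m+1$, $n=2m+1$ for \eqref{mediddens}. The paper is terser (it merely says ``follows by simple differentiating''), while you spell out the telescoping cancellation and add the multinomial heuristic as a cross-check, but the substance is identical.
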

\begin{proof}{}
The proof is standard, but as we will need some terms later, we pass through the main steps here:
For fixed $t\in\R$ we introduce $Y_i:=\Jc_{\{X_i \leq t\}}$. Then
the following events are identical
\begin{equation}\label{eqevent}
\{X_{[k:n]}\leq t\}=\{ \# i:\{X_i \leq t\}\geq k\}= \{ \sum_{i=1}^n Y_i\geq k\}
\end{equation}
The fact that $Y_i\iid{\rm Bin}(1,P(t))$ entails \eqref{VertMede}.
\eqref{Xkiddens} follows by simple differentiating, \eqref{mediddens} by plugging in $k=m+1$.
\qed\end{proof}
\subsubsection{Contaminated situation}
By \eqref{contdef}, $X_i=(1-U_i)X^{\SSs \rm id}_i+U_iX^{\SSs \rm di}_i$, and thus fixing again $t\in\R$, also
\begin{equation}
  Y_i=(1-U_i)Y^{\SSs \rm id}_i+U_iY^{\SSs \rm di}_i
\end{equation}
with correspondingly defined variables. As we sum up the $Y_i$ in \eqref{eqevent},
only $S_n=\sum Y_i$ will matter. As indicated in the outline, we split up the event $\{S_n>m\}$ by realizations of $K$, and
in the section $\{K=k\}$ we may suggestively write $S_n=S^{\SSs\rm id}_{n-k}+S^{\SSs \rm di}_{k}$, giving
$$
\{{\rm Med}_n\leq t\}={\Ts \dot\bigcup_{k=1}^m} \{S^{\SSs\rm id}_{n-k}+S^{\SSs\rm di}_{k}>m\}\cap\{K=k\}
$$
Splitting up again this event by the realizations of $S^{\SSs\rm di}_{k}$, we get
\begin{eqnarray}
\{{\rm Med}_n\leq t\}&=&{\Ts \dot\bigcup_{k=0}^m\dot\bigcup_{j=0}^k} \{S^{\SSs \rm id}_{n-k}>m-j\}\cap\{S^{\SSs\rm di}_{k}=j\}\cap\{K=k\}
\end{eqnarray}
Thus, for the moment, we may consider the situation that exactly $k$ observations, $0\leq k\leq m$, are contaminated, and
exactly $j=j_k(t)$ of the contaminated observations are larger than $t$ and denote that event with $D_{j,k,t}$.
As $\{X^{\SSs id}_{[m-j:n-k]}\leq t\}$ is independent from $D_{j,k,t}$, with $\bar F= 1-F$, the
the conditional density of ${\rm Med}_n$ knowing $D_{j,k,t}$ is
\begin{equation}
  g_{n,j,k}(t):=(n-k){{2m-k}\choose{m-j}} F(t)^{m-j}\bar F(t)^{m+j-k} f(t)\label{denseodd}
\end{equation}
Thus abbreviating again $j_k(t)$ by $j$, we get the following representation
\begin{equation}\label{nMSE}
  n\,{\rm MSE}({\rm Med}_n,G^{(n)})=n\, \sum_{k=0}^m \sum_{j=0}^k  \int t^2  g_{n,j,k}(t)\,dt\;P(S_k^{\SSs\rm di}=j)\,
  P(K=k)
\end{equation}

\subsection{Auxiliary results}
%
%
Before starting with the results we need some preparations
\subsubsection{Stirling approximations}\label{stirlsect}
We start with writing down some approximations for the factorials and the binomial coefficients derived from the Stirling formula
to be found e.g.\ in \citet[6.1.37]{Ab:St:84}:
\begin{eqnarray}
\!\!\!\!\!{{2n-k}\choose{n-j}}&=&(\Tfrac{2n-k}{\max(n-j,1)})^{n-j} (\Tfrac{2n-k}{n+j-k})^{n+j-k} \sqrt{\Tfrac{2n-k}{(n+j-k)(n-j)2\pi}} (1+\rho_{n,j,k}),\;\;\;
\mbox{for}\,-\Tfrac{1}{2}-\Tfrac{1}{48n}\leq\rho_{n,j,k}\leq \Tfrac{1}{12n},\\
\!\!\!\!\!&=& (\Tfrac{2n-k}{n-j})^{n-j} (\Tfrac{2n-k}{n+j-k})^{n+j-k} \sqrt{\Tfrac{2n-k}{(n+j-k)(n-j)2\pi}}(1-\Tfrac{1}{8n}+\Lo(\Tfrac{1}{n})),\;\;\;
\mbox{for } j,k=\LO(\sqrt{n\,}\,),
\label{chjkn}
\end{eqnarray}
The next lemma will be needed to settle case (III):
\begin{Lem}\label{binlem}
Let
\begin{equation}
\kappa:=k_1\log k_1+1-k_1
\end{equation}
Then
it holds that
\begin{equation}\label{alphahoeff}
\Pr({\rm Bin}(n,r/\sqrt{n\,}\,)>k_1 r\sqrt{n})\leq\exp\big(-\kappa\, r\sqrt{n}+\Lo(\sqrt{n}\,)\big)
\end{equation}
\end{Lem}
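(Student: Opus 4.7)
The plan is to use a Chernoff-type exponential moment bound on $X\sim{\rm Bin}(n,p)$ with $p=r/\sqrt{n\,}$, since the Hoeffding bound invoked earlier is too coarse here: at deviation $(k_1-1)r/\sqrt{n}$ from the mean, Hoeffding would only yield the $n$-independent rate $\exp(-2(k_1-1)^2 r^2)$, whereas we need exponential decay of order $\exp(-\kappa r\sqrt{n\,}\,)$. Hence the Bennett/Chernoff refinement is unavoidable.

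The first step is to apply the standard Chernoff bound: for $q>p$,
\begin{equation*}
\Pr(X\ge nq)\le\exp\bigl(-n\,D(q\|p)\bigr), \qquad D(q\|p):=q\log\Tfrac{q}{p}+(1-q)\log\Tfrac{1-q}{1-p},
\end{equation*}
obtained by optimizing $e^{-tnq}(1-p+pe^t)^n$ over $t>0$ (choosing $t=\log\frac{q(1-p)}{p(1-q)}$). I would then specialize to $p=r/\sqrt{n\,}$ and $q=k_1 r/\sqrt{n\,}$; since $k_1>1$, the hypothesis $q>p$ is satisfied. The first summand of $D$ evaluates exactly to $k_1 r\log k_1/\sqrt{n\,}$.

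The second step is the expansion of the remaining summand. Using $\log(1-x)=-x-x^2/2+\LO(x^3)$ and subtracting, one gets $\log\bigl((1-q)/(1-p)\bigr)=-(q-p)+\LO(n^{-1})=-(k_1-1)r/\sqrt{n}+\LO(n^{-1})$; multiplying by $(1-q)=1+\LO(n^{-1/2})$ still yields $-(k_1-1)r/\sqrt{n}+\LO(n^{-1})$. Summing the two contributions and multiplying by $n$ gives
\begin{equation*}
n\,D(q\|p)=r\sqrt{n}\,(k_1\log k_1+1-k_1)+\LO(1)=\kappa\, r\sqrt{n}+\Lo(\sqrt{n\,}\,),
\end{equation*}
and exponentiating yields \eqref{alphahoeff}.

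The only nontrivial point, and the sole potential obstacle, is the bookkeeping of Taylor remainders: one must check that the $\LO(n^{-1})$ error inside $\log((1-q)/(1-p))$, once amplified by the prefactor $n(1-q)$, contributes at most an additive $\LO(1)$ to the exponent rather than something of order $\sqrt{n}$. Since $k_1,r$ are fixed constants and $p,q=\LO(n^{-1/2})$, the cross term $n\cdot q\cdot(q-p)$ is $\LO(1)$, so this is benign. Beyond that, the derivation is routine, and the positivity of $\kappa$ for $k_1>1$ (as the convex function $x\mapsto x\log x+1-x$ has its unique zero at $x=1$) ensures the bound is genuinely decaying in $n$.
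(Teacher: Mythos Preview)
Your proof is correct and, despite the framing, essentially identical to the paper's. The bound you call the Chernoff bound,
\[
\Pr(X\ge nq)\le\exp\bigl(-n\,D(q\|p)\bigr),
\]
is precisely what the paper invokes as ``Hoeffding's inequality, Theorem~1, inequality~(2.1)'': Hoeffding's 1963 paper contains both the crude bounded-difference bound (Theorem~2, used earlier in Section~\ref{modifsec}) and this sharp relative-entropy bound (Theorem~1). So your remark that ``the Hoeffding bound invoked earlier is too coarse'' is right about the Theorem~2 version, but the paper resolves this the same way you do---by quoting the sharper inequality from the same source rather than by switching to a different technique.

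The only cosmetic difference is in how the second summand is handled: you Taylor-expand $\log(1-x)=-x-x^2/2+\LO(x^3)$, while the paper uses the one-line sandwich $-x/(1-x)\le\log(1-x)\le -x$; both produce $n\,D(q\|p)=\kappa r\sqrt{n}+\LO(1)$. Your final remark on the positivity of $\kappa$ via convexity of $x\mapsto x\log x+1-x$ matches the paper's observation that $\kappa=\int_1^{k_1}\log x\,dx>0$.
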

\begin{proof}{}
We first note that $\kappa>0$, as $\log(x)>0$ for $x>1$ and
$\kappa=\int_1^{k_1}\log(x)\, dx$.
By Hoeffding's inequality \citep[Thm.~1, inequality (2.1)]{Hoef:63}, we have for
$\xi_i$, $i=1,\ldots,n$ ${\rm i.i.d.}$ real--valued random variables, $|\xi_i|\leq M$,
$\mu=\Ew[\xi_1]$ and $0<\ve<1-\mu$
  \begin{eqnarray}
    P\big(\Tfrac{1}{n}\Tsum_i \xi_i -\mu \geq \ve \big) &\leq& \Big\{\left(\Tfrac{\mu}{\mu+\ve}\right)^{\mu+\ve}
    \left(\Tfrac{1-\mu}{1-\mu-\ve}\right)^{1-\mu-\ve}\Big\}^n\label{hoe3}
  \end{eqnarray}
Applying \eqref{hoe3} to the case of $n$ independent ${\rm Bin}(1,r/\sqrt{n\,}\,)$ variables,
we obtain for $B_n\sim{\rm Bin}(n,r/\sqrt{n\,}\,)$ and $0<\ve=(k_1-1)r/\sqrt{n}<1-r/\sqrt{n\,}\,$:
\begin{eqnarray*}
  \Pr(B_n>k_1r\sqrt{n}\,)&\leq& 
\exp\Big(-k_1r\sqrt{n}\,\log(k_1)+(n-k_1r\sqrt{n}\,)
 \big(\log(1-\frac{r}{\sqrt{n}\,})-\log(1-k_1\frac{r}{\sqrt{n}\,})\big)\Big)
\end{eqnarray*}
For $x \in(0,1)$,
$-\frac{x}{1-x} \leq  \log(1-x)\leq -x$.
Thus the difference of the logarithms is smaller than
$
(k_1r)/\big(\sqrt{n}\,(1-k_1r/\sqrt{n}\,)\big)-r/\sqrt{n}
$
and \vspace{1ex}\newline
\centerline{$\Pr(B_n>k_1r\sqrt{n}\,)\le
\exp\big(-\kappa\, r\sqrt{n}+\Lo(\sqrt{n}\,)\big)$}
\qed\end{proof}
%
\begin{Cor} \label{corewk}
Let $X\sim{\rm Bin}(n,r/\sqrt{n})$. Then for each $i\in\N_0$
\begin{equation}
\Ew[X^i \Jc_{\{X\geq k_1 r \sqrt{n}\}}]=\Lo(n^{-1})
\end{equation}
\end{Cor}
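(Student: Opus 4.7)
The plan is to reduce the assertion to the tail bound of Lemma~\ref{binlem} by means of the crude deterministic inequality $X\le n$. Since on the event $\{X\ge k_1 r\sqrt{n}\,\}$ we have $X^i\le n^i$, it follows that
\begin{equation*}
\Ew[X^i \Jc_{\{X\ge k_1 r\sqrt{n}\}}]\le n^i\,\Pr(X\ge k_1 r\sqrt{n}\,).
\end{equation*}
Applying Lemma~\ref{binlem} (with the trivial modification that $\{X\ge k_1 r\sqrt{n}\}$ differs from $\{X>k_1 r\sqrt{n}\}$ by at most one atom, which is negligible against the exponential bound, or simply by replacing $k_1$ by a slightly smaller $k_1'>1$), we obtain
\begin{equation*}
\Ew[X^i \Jc_{\{X\ge k_1 r\sqrt{n}\}}]\le n^i\exp\bigl(-\kappa r\sqrt{n}+\Lo(\sqrt{n}\,)\bigr).
\end{equation*}

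Since $\kappa = k_1\log k_1+1-k_1>0$ for $k_1>1$, the right-hand side decays faster than any inverse polynomial of $n$: taking logarithms gives $i\log n - \kappa r\sqrt{n}+\Lo(\sqrt{n}\,)\to-\infty$, so in particular the quantity is $\Lo(n^{-j})$ for every fixed $j\in\N$, and hence $\Lo(n^{-1})$ as claimed.

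The argument is essentially one line, and there is no real obstacle beyond invoking Lemma~\ref{binlem}; the only minor point to watch is that the exponent $i$ in the polynomial prefactor is fixed (independent of $n$), so that the super-polynomial decay of the Hoeffding-type bound dominates uniformly. Since the corollary is stated for each fixed $i$, this is automatic.
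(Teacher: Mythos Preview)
Your proof is correct and essentially identical to the paper's own argument: the paper also bounds $X^i$ by $n^i$ and then invokes the exponential tail estimate~\eqref{alphahoeff} from Lemma~\ref{binlem}, concluding that $n^i\exp(-\kappa r\sqrt{n})=\Lo(n^{-1})$. Your remarks on the $\ge$ versus $>$ issue and on why the exponential term dominates the polynomial prefactor simply make explicit what the paper leaves implicit.
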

\begin{proof}{}
$\Ew[X^i \Jc_{\{X\geq k_1 r \sqrt{n}\}}]
\leq
n^i\Pr(X>k_1r\sqrt{n})\stackrel{\eqref{alphahoeff}}{\leq}{\rm const}\, n^i\exp(-\kappa\, r\sqrt{n})
$
\qed\end{proof}
\begin{Lem} \label{f0lem}
 We have that for $j,k=\LO(\sqrt{n})$
  \begin{equation}
|\Tfrac{m-j}{2m-k}-F(t)| \leq k_2 \, \sqrt{\frac{\log(n)}{n}}\,(1+\Lo(n^0))
\iff
|t|\leq \frac{k_2}{f_0}\,\sqrt{\frac{\log(n)}{n}}\,(1+\Lo(n^0))
  \end{equation}
\end{Lem}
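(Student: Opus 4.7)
The plan is to reduce both sides of the claimed equivalence to statements about how $F(t)-\tfrac12$ relates to $\sqrt{\log(n)/n}$, using the Taylor expansion \eqref{tayl1} together with $F(0)=\tfrac12$ from \eqref{medunb}.

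First I would rewrite the left-hand quantity exactly, as
\begin{equation*}
\frac{m-j}{2m-k}-\frac{1}{2}=\frac{k-2j}{2(2m-k)}.
\end{equation*}
Since $j,k=\LO(\sqrt{n})$ and $2m-k=n(1+\Lo(1))$, this difference is $\LO(n^{-1/2})$, which in particular is of smaller order than $\sqrt{\log(n)/n}$. Hence
\begin{equation*}
\Big|\tfrac{m-j}{2m-k}-F(t)\Big|\leq k_2\sqrt{\tfrac{\log n}{n}}(1+\Lo(n^0))
\iff
\Big|F(t)-\tfrac12\Big|\leq k_2\sqrt{\tfrac{\log n}{n}}(1+\Lo(n^0)),
\end{equation*}
because the $\LO(n^{-1/2})$ discrepancy is absorbed into the $(1+\Lo(n^0))$ factor.

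Next I would integrate the density expansion \eqref{tayl1} to obtain, for $t$ in a neighborhood of $0$,
\begin{equation*}
F(t)-\tfrac12 = f_0\,t + \tfrac{f_1}{2}t^2 + \LO(t^{3+\delta_0}),
\end{equation*}
so that $|F(t)-\tfrac12|=f_0|t|(1+\LO(t))$. The equivalence then reads $f_0|t|(1+\LO(t)) \leq k_2\sqrt{\log(n)/n}(1+\Lo(n^0))$, i.e.\ $|t|\leq \tfrac{k_2}{f_0}\sqrt{\log(n)/n}(1+\Lo(n^0))$. In particular any $t$ satisfying either inequality forces $t=\LO(\sqrt{\log(n)/n})$, so the quadratic and higher-order correction terms in the Taylor expansion are of size $\LO(\log(n)/n)$, hence negligible against $\sqrt{\log(n)/n}$ and again absorbed into the $(1+\Lo(n^0))$ factor.

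The only mild subtlety---and the main thing to be careful about---is monotonicity and inversion: because $f_0>0$, the map $t\mapsto F(t)-\tfrac12$ is strictly increasing in a neighborhood of $0$, so one can invert the Taylor expansion without ambiguity and pass between bounds on $|F(t)-\tfrac12|$ and bounds on $|t|$. Once this is noted, both directions of the ``iff'' follow from the same chain of estimates, with the $\LO(n^{-1/2})$ correction from the first step and the $\LO(\log(n)/n)$ correction from the Taylor remainder both comfortably swallowed by the $(1+\Lo(n^0))$ factor in front of $k_2\sqrt{\log(n)/n}$.\qed
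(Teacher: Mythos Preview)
Your proof is correct and follows essentially the same route as the paper's: both arguments observe that $\tfrac{m-j}{2m-k}=\tfrac12+\LO(n^{-1/2})$ and then use the local Taylor expansion $F(t)=\tfrac12+f_0t+\Lo(t)$ to pass between a bound on $|F(t)-\tfrac12|$ and one on $|t|$. Your version is simply more explicit about monotonicity and the two directions of the ``iff''; one cosmetic slip is that the integrated expansion should read $F(t)-\tfrac12=f_0t+\tfrac{f_1}{2}t^2+\tfrac{f_2}{6}t^3+\LO(t^{3+\delta_0})$, but this does not affect the argument since you only use the leading term.
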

\begin{proof}{} 
Using the fact that $j,k=\LO(\sqrt{n}\,)$, we note that
\begin{equation}
\Tfrac{m-j}{2m-k}=1/2+\Tfrac{k-2j}{4m}+\Tfrac{k(k-2j)}{8m^2}+\Lo(n^{-1})
\end{equation}
By \eqref{tayl1}, \eqref{medunb},
$F(t)=1/2+f_0t+\Lo(t)$; thus
$
|\Tfrac{m-j}{2m-k}-F(t)|=|\LO(\Tfrac{1}{\sqrt{n}}\,)-f_0t|
$.
\qed\end{proof}
\begin{Lem}\label{binmomlem}
  Let $X\sim {\rm Bin}(n,p)$. Then,
for $p=r/\sqrt{n}$,
  \begin{align}
    &\Ew[X]=rn^{1/2},\qquad\Ew[X^2]=r^2n+rn^{1/2}-r^2,\\
    &\Ew[X^3]=r^3n^{3/2}+3r^2n+(r-3r^3)n^{1/2}-3r^2+2r^3n^{-1/2},\\
    &\Ew[X^4]=r^4n^2+6r^3n^{3/2}+(7r^2-6r^4)n+(r-18r^3)n^{1/2}+
    +11r^4-7r^2+12r^3n^{-1/2}-6r^4n^{-1}
  \end{align}
\end{Lem}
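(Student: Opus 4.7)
\textbf{Proof proposal for Lemma~\ref{binmomlem}.} The plan is to compute the power moments of a Binomial variable via factorial moments, then specialize to $p=r/\sqrt{n}$ and collect terms according to powers of $n^{1/2}$. This is entirely a routine bookkeeping calculation; the only potential pitfall is getting the Stirling-number conversion right and keeping track of the many terms in $\Ew[X^4]$.

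\emph{Step 1 (factorial moments).} For $X\sim\mathrm{Bin}(n,p)$ the falling factorial moments are
\begin{equation*}
\Ew\bigl[X(X-1)\cdots(X-\ell+1)\bigr]=n(n-1)\cdots(n-\ell+1)\,p^\ell,\qquad \ell=1,2,3,4,
\end{equation*}
as is seen directly by reducing each term $\binom{n}{x}p^x(1-p)^{n-x}$ with index shift $x\mapsto x-\ell$.

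\emph{Step 2 (powers to factorial moments).} Using the standard Stirling-number-of-the-second-kind expansions
\begin{align*}
x &= x,\qquad x^2 = x(x-1)+x,\\
x^3 &= x(x-1)(x-2)+3x(x-1)+x,\\
x^4 &= x(x-1)(x-2)(x-3)+6x(x-1)(x-2)+7x(x-1)+x,
\end{align*}
we convert $\Ew[X^i]$ ($i=1,\ldots,4$) into linear combinations of the falling factorial moments of Step~1.

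\emph{Step 3 (specialization and collection).} Substituting $p=r/\sqrt{n}$ into the polynomials $n(n-1)\cdots(n-\ell+1)p^\ell$ produces, for each $\ell$, a Laurent polynomial in $n^{1/2}$ of highest degree $\ell$ and lowest degree $-(\ell-2)$. Summing the contributions from Step~2 and grouping by powers of $n^{1/2}$ yields the four stated formulas. For instance, the $n^{1/2}$-coefficient of $\Ew[X^4]$ comes out to $-18r^3+r$ from $6\cdot(-3)r^3+0+0+r$, and the $n^{-1}$-coefficient to $-6r^4$ from $n(n-1)(n-2)(n-3)p^4$ alone; the remaining coefficients are obtained by the same additive bookkeeping.

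\emph{Obstacle.} There is no real conceptual obstacle: the argument is purely algebraic, and one only needs to be careful about the sign and multiplicity of each falling-factorial term. Once the expansions of $x^i$ in Step~2 are correct, the substitution $p=r/\sqrt{n}$ and collection of like powers is mechanical, and can be verified (as the author does elsewhere in the paper) by a short \texttt{MAPLE} computation. \qed
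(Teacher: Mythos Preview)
Your proposal is correct and is essentially the same approach as the paper's: the paper simply refers to its {\tt MAPLE} procedure {\tt Binmoment} for this purely algebraic computation, while you spell out the standard route via factorial moments $\Ew[X^{(\ell)}]=n^{(\ell)}p^\ell$ and Stirling numbers of the second kind. The added detail is welcome and all the coefficients you list check out against the stated formulas.
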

\begin{proof}{}
Cf.\  the {\tt MAPLE}-procedure {\tt Binmoment} on the web-page.
\qed\end{proof}
Finally, we note  the following Lemma for ${\cal N}(0,1)$ variables
\begin{Lem} \label{lemnormlog}
  Let $X\sim {\cal N}(0,1)$. Then for $k\in\N$ and any $c>\sqrt{2}$,
  \begin{equation}
    \Ew [|X|^k\Jc_{\{|X|\geq c\sqrt{\log(n)}\}}] =\Lo(n^{-1})
  \end{equation}
\end{Lem}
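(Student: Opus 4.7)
The plan is to reduce the expectation to a Gaussian tail integral and bound that integral by elementary integration by parts. Writing $\varphi(x)=(2\pi)^{-1/2}e^{-x^2/2}$ and $a_n:=c\sqrt{\log n}$, symmetry of $X\sim{\cal N}(0,1)$ gives
$$\Ew[|X|^k\Jc_{\{|X|\ge a_n\}}] \;=\; 2\int_{a_n}^\infty x^k\varphi(x)\,dx,$$
so the task is to show $\int_{a_n}^\infty x^k\varphi(x)\,dx = \Lo(n^{-1})$.

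The key identity is $x\varphi(x)=-\varphi'(x)$, which turns the integrand into (essentially) a total derivative times a polynomial. Setting $I_k(a):=\int_a^\infty x^k\varphi(x)\,dx$ and integrating by parts in $\int_a^\infty x^{k-1}\bigl(x\varphi(x)\bigr)\,dx$ yields the recursion
$$I_k(a) \;=\; a^{k-1}\varphi(a) + (k-1)\,I_{k-2}(a) \qquad (k\ge 1),$$
which together with the Mills ratio $I_0(a)\le \varphi(a)/a$ and a straightforward induction on $k$ gives a bound $I_k(a)\le C_k\,a^{k-1}\varphi(a)$ for $a\ge 1$ and some constant $C_k$ depending only on $k$.

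Evaluating at $a=a_n$, one has $\varphi(a_n)=(2\pi)^{-1/2}\,n^{-c^2/2}$, whence
$$a_n^{k-1}\varphi(a_n) \;=\; (2\pi)^{-1/2}\,c^{k-1}(\log n)^{(k-1)/2}\,n^{-c^2/2}.$$
The assumption $c>\sqrt{2}$ means $c^2/2 = 1+\eta$ for some $\eta>0$, so the right-hand side is bounded by a constant times $(\log n)^{(k-1)/2}\,n^{-1-\eta}$, which is $\Lo(n^{-1})$, completing the argument.

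There is no real obstacle; the one fine point to notice is that a cruder approach via Cauchy--Schwarz against the raw tail bound $\Pr(|X|\ge a)\le 2e^{-a^2/2}$ would yield only $\Lo(n^{-c^2/4})$, requiring the stronger hypothesis $c>2$. The integration-by-parts sharpening, which gains a factor of $1/a$ from Mills' ratio, is precisely what makes the weaker assumption $c>\sqrt{2}$ sufficient.
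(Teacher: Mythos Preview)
Your proof is correct and follows essentially the same route as the paper: both use the identity $x\varphi(x)=-\varphi'(x)$ to integrate by parts, reducing the tail moment to a polynomial in $a_n$ times $\varphi(a_n)$, and both invoke the Mills ratio bound $\bar\Phi(x)\le\varphi(x)/x$ for the base case. The paper separates odd and even $k$ explicitly while you handle both via the uniform recursion $I_k(a)=a^{k-1}\varphi(a)+(k-1)I_{k-2}(a)$, but the underlying mechanism and the decisive use of $c>\sqrt{2}$ (so that $\varphi(a_n)\asymp n^{-1-\eta}$) are identical.
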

\begin{proof}{} 
Let $\Phi(x):=\Pr(X\leq x)$, $\bar \Phi:=1-\Phi$, $\varphi(x)$ the density of $X$.
Then
$$
\Ew [X^k\Jc_{\{X\geq c\sqrt{\log(n)}\}}]=\left\{
\begin{array}{lcl}
P_k(x)\,\varphi(x)\Big|_{c\sqrt{\log(n)}}^{\infty}&\mbox{for}&\mbox{$k$ odd}\\
P_k(x)\,\varphi(x) +\prod_{i=1}^{k/2} (2i-1) \Phi(x)\Big|_{c\sqrt{\log(n)}}^{\infty} &&\mbox{$k$ even}\\
\end{array}
\right.
$$
%
%
for some polynomial $P_k$ of degree $k-1$. The assertion follows,
as $\varphi(c\sqrt{\log(n)})=\varphi(0)n^{-c^2/2}=\varphi(0)n^{-(1+\delta)}$ for some $\delta>0$,
and because for the $\Phi(x)$-term, $\bar\Phi(x)\leq \varphi(x)/x$ for $x>0$.
\qed\end{proof}
\subsection{Proof for odd sample size}
We recall the density $g_{n,j,k}$ from \eqref{denseodd}:
$$
  g_{n,j,k}(t):=(n-k){{2m-k}\choose{m-j}} F(t)^{m-j}\bar F(t)^{m+j-k} f(t)
$$
So the integrand of interest is $n\, t^2\,g_{n,j,k}(t)$. Applying the Stirling approximation \eqref{chjkn} to the constants, we get
\begin{equation}
{{2m-k}\choose{m-j}}=\Big(\frac{2m-k}{m-j}\Big)^{m-j}\Big(\frac{2m-k}{m-k+j}\Big)^{m-k+j}\gamma_{n,j,k}
\end{equation}
with
\begin{equation}\label{gammadef}
\gamma_{n,j,k}:= \sqrt{\Tfrac{2m-k}{(m+j-k)(m-j)2\pi}} \,(1+\rho_{m,j,k})
\end{equation}
for $\rho_{m,j,k}$ from \eqref{chjkn}.
As $F(t)^{m-j}\bar F(t)^{m+j-k}$ suggests an asymptotic decay, we will expand $g_{n,j,k}$ at the mode of $F(t)^{m-j}\bar F(t)^{m+j-k}$.
Differentiating, we easily get that
\begin{equation}\label{expungl}
F(t)^{m-j}\bar F(t)^{m+j-k}\leq \Big(\frac{m-j}{2m-k}\Big)^{m-j}\Big(\frac{m+j-k}{2m-k}\Big)^{m+j-k}
\end{equation}
with equality iff $t=x_{n,j,k}$ for
\begin{equation}
x_{n,j,k}:=F^{-1}(\Tfrac{m-j}{2m-k})
\end{equation}
%
%
%
Introducing
\begin{equation}
  \Delta F_{n,j,k}:=F(t)-\Tfrac{m-j}{2m-k}=F(t)-F(x_{n,j,k}),
\end{equation}
we see that
\begin{equation}
g_{n,j,k}(t)=(n-k)\gamma_{n,j,k} f(t) [1+\Tfrac{2m-k}{m-j\,}\Delta F_{n,j,k}]^{m-j}[1-\Tfrac{2m-k}{m+j-k}\,\Delta F_{n,j,k}]^{m+j-k}\label{convenii}
\end{equation}
\smallskip\\
{\bf Case (III)}: 
For $k\in [k_1 r\sqrt{n},\ve n]$, $0\leq j\leq k$, we partition the terms according to \eqref{convenii}
and see that on $|t|\leq n^2$, the integrand $n\,t^2\,g_{n,j,k}(t)$ multiplied by $n^{-4}$ is $\Lo(n^0)$ for each fixed $t$
and is dominated by $f(t)$ and hence by dominated convergence tends to $0$ as $n\to \infty$.
But Lemma~\ref{binlem} yields that $\Pr(K\geq k_1r\sqrt{n})$ decays exponentially in $n$, hence is even $\Lo(n^{-4})$, so  as noted, (III)
is indeed negligible asymptotically to order $\Lo(n^{-1})$.\smallskip\\
%
%
\noindent{\bf Case (II)}: 
Here $k\leq k_1 r\sqrt{n}$ and
$|t|>\Tfrac{k_2}{f_0} \sqrt{\log(n)/n}$, or equivalently by Lemma~\ref{f0lem}:
\begin{equation}\label{equivk2}
|\Delta F_{n,j,k}|>k_2 \sqrt{\log(n)/n}
\end{equation}
Now for $x>0$, $\log(1+x)\leq x$ and for $0<x<1$, $\log(1-x)\leq -x-x^2/2$.
Hence, we obtain
%
eventually in $n$
\begin{equation}
g_{n,j,k}(t)/f(t)\leq(n-k)\gamma_{n,j,k} \exp(-\Tfrac{(2m-k)^2}{2m} \Delta F_{n,j,k}^2)
\stackrel{\mbox{\tiny \eqref{chjkn}}\atop\mbox{\tiny \eqref{equivk2}}}{\leq}(n-k)
\sqrt{\Tfrac{1}{(m-k/2)}} (1+\Tfrac{1}{12m})
\exp[-\,k_2^2\log(m)
]\end{equation}
Plugging in that $m-k/2\geq m-k_1r\sqrt{m/2}$ in (II), we get
\begin{eqnarray}
g_{n,j,k}(t)/f(t)
&\leq &{\rm const}\,  m^{\frac{1}{2}-k_2^2}\big(1+\Lo(n^0)\big)=\Lo(n^{-2}) \label{condo}
\end{eqnarray}
where the last equality is a consequence of $k_2>\sqrt{5/2}$.
So negligibility 
follows 
by dominated convergence.
\smallskip\\
\noindent{\bf Case (IV)}: 
We only treat the case $t>n^2$; a corresponding relation holds for $t<-n^2$.
Under \eqref{contdef}, for $n$ large enough, we obtain bound
 $
g_{n,j,k} \leq n 2^n  f(t) \bar F(t)^{[(1-2\ve)n-1]/2}
 $.
Let $\eta=1/2-\ve$, $b=2/\delta$ and $\delta'\in (0,1)$.
By choosing $n$ large enough, we may achieve that $\bar F(n^2)=:\lambda_n<2^{-1/\eta}$ and $F(n^2)^{2b}>1-\delta'$.
So by \eqref{Jurec}, we get  eventually in $n$ and for some constant $c$ and any $\eta'>0$, and $g_\delta$ from the
proof of Remark~\ref{slowdecay}(a)
\begin{eqnarray*}
&&n\int_{n^2}^\infty t^2 g_{n,j,k}(t)\,dt \leq
\frac{n^2}{1-\delta'} 2^n  \int_{n^2}^\infty [t^\delta F(t)\bar F(t)]^b  [F(t)\bar F(t)]^b \,dt \lambda_n^{\eta n -1/2-2b} \leq \\
&\leq& \frac{\hat g_\delta^b I_b n^2}{(1-\delta')\lambda_n^{1/2+2b}} (2\lambda_n^\eta)^n =
c \exp(-|\log \lambda_n| n \big[\eta-\Tfrac{\log 2}{|\log \lambda_n|}-\Tfrac{1+4b}{2n}-\Tfrac{2\log n}{n \log \lambda_n}\big])\leq
\exp(-\eta' n)
\end{eqnarray*}
%
\smallskip\\
{\bf Case (I)}: 
Here we restrict ourselves to the case that
\begin{equation}\label{I-def}
k\leq k_1\sqrt{n}r,\qquad |\Tfrac{m-j}{2m-k}-F(t)|\leq k_2\sqrt{\log(n)/n}
\end{equation}
Doing so, we set
$u:=t-x_{n,j,k}$.
As on (I), $k=\LO(\sqrt{n})$ as well as $j$, we make
this magnitude explicit to {\tt MAPLE} in the function {\tt transf} by introducing the bounded variables
\begin{equation}\label{tildekl}
  \tilde k:=k/ \sqrt{m}\qquad\mbox{ and }\tilde \jmath :=(k/2-j)/ \sqrt{m}
\end{equation}
This gives the expansion in powers of $m^{-1/2}$
\begin{equation}
{(m-j)}/{(2m-k)}=1/2+
{\tilde \jmath}/{(4\sqrt{m})}+
{\tilde k \tilde \jmath}/{(8m)}+\Lo(n^{-1})
\end{equation}
Thus, to get an approximation to $x_{n,j,k}=F^{-1}(\Tfrac{m-j}{2m-k})$, we
expand this in a Taylor series in powers of $m^{-1/2}$ (compare our {\tt MAPLE}-procedure {\tt asquantile})
which gives 
\begin{equation}\label{xnk}
x_{n,j,k}=\Tfrac{\tilde \jmath}{2f_0\sqrt{m}}+\Tfrac{2f_0^2\tilde \jmath \tilde k-f_1 \tilde \jmath^2}{8f_0^3m}+
\Tfrac{6f_0^4\tilde \jmath \tilde k^2-6f_0^2f_1\tilde \jmath^2\tilde k-f_0f_2\tilde \jmath^3+3f_1^2\tilde \jmath^3}{48f_0^5m^{3/2}}+\Lo(n^{-3/2})
\end{equation}
Furthermore,
\begin{eqnarray*}
f(x_{n,j,k})
&=&f_0-%
\frac{f_1\tilde \jmath}/{(2f_0m^{1/2})}+
{\big(-f_1^2\tilde \jmath^2+2f_1f_0^2\tilde \jmath \tilde k+f_2f_0\tilde \jmath^2\big)}/{(8f_0^3m)}+\Lo(1/n)
\end{eqnarray*}
which implies that in (I), by \eqref{I-def}, $u$ lies in a shrinking compact, as
\begin{eqnarray*}
u&=&F^{-1}(F(t))-F^{-1}(F(x_{n,j,k}))=
f(x_{n,j,k})^{-1}(F(t)-\Tfrac{m-j}{2m-k})+\Lo(\sqrt{\log(n)/n})=\LO(\sqrt{\log(n)/n}).
\end{eqnarray*}
Setting $\Delta F_{n,j,k}:=F(t)-F(x_{n,j,k})$, and expanding this in a Taylor series around $0$, we get
\begin{eqnarray*}
\!  \Delta F_{n,j,k}&=&f_0u+f_1(u^2/2+u x_{n,j,k})+
  f_2(u^3/6+(u^2x_{n,j,k}+ux_{n,j,k}^2)/2)+
  \Lo(n^{-3/2})
\end{eqnarray*}
and
$$f(t)=f_0+f_1(u+x_{n,j,k})+f_2((u+x_{n,j,k}))^2/2+\Lo(n^{-1})$$
We turn to the constant factors now; up to now, the terms
arising by applications of the Stirling formulas of subsection~\ref{stirlsect} come with $k$--terms in the nominators.
As we want to integrate over $K$ later, however, it is preferable to move these terms into the denominators by Taylor approximations
---here performed by the functions {\tt asympt} and {\tt collect} in {\tt MAPLE} (compare our function {\tt asbinom}):%
\begin{eqnarray}
n(n-k)\sqrt{2\pi}\gamma_{n,j,k}
&=&2^{\frac{5}{2}} m^{3/2}[1-\Tfrac{\tilde k}{4m^{1/2}}+\Tfrac{16\tilde \jmath^2-\tilde k^2+28}{32m}]+\Lo(n^{\frac{1}{2}})
\label{mfac1}\\
\Tfrac{(2m-k)^2}{2(m-j)}+\Tfrac{(2m-k)^2}{2(m+j-k)}&=&4m
(1-\Tfrac{\tilde \jmath+\tilde k}{m^{1/2}}+\Tfrac{\tilde \jmath^2+\tilde \jmath \tilde k+\frac{\tilde k^2}{2}}{m})+\Lo(n)\\
\Tfrac{(2m-k)^3}{3(m-j)^2}-\Tfrac{(2m-k)^3}{3(m+j-k)^2}&=&
\Tfrac{16 \tilde k \sqrt{m}}{3}-(8\tilde k^2+16\tilde \jmath \tilde k)+\Tfrac{20 \tilde k^3+72\tilde \jmath \tilde k^2+96\tilde \jmath^2 \tilde k}{3m^{1/2}}
+\Lo(n^{-\frac{1}{2}})
\end{eqnarray}
Next we expand $[1+\Tfrac{2m-k}{m-j\,}\Delta F_{n,j,k}]^{m-j}[1-\Tfrac{2m-k}{m+j-k}\,\Delta F_{n,j,k}]^{m+j-k}$: We
plug in \eqref{xnk}, set
\begin{equation}
\sigma^2_n:=8mf_0^2,\qquad y:=u\sigma_n
\end{equation}
and apply the Taylor expansion $\exp(x)=1+x+x^2/2+\Lo(x^2)$. This gives
$$[1+\Tfrac{2m-k}{m-j\,}\Delta F_{n,j,k}]^{m-j}[1-\Tfrac{2m-k}{m+j-k}\,\Delta F_{n,j,k}]^{m+j-k}=\exp(-y^2/2)\, h(y,\tilde \jmath ,\tilde k,n)+\Lo(n^{-1})$$
with
\begin{eqnarray}
h(y,\tilde \jmath ,\tilde k,n)=1+\Big(\left({\Tfrac {\tilde k}{4}}-\,{\Tfrac {f_1\,\tilde \jmath\,{y}^{2}}{2{f_0}^{2}}}\right)\,y^2-\,
{\Tfrac {f_1}{8{f_0}^{2}}}\,\sqrt {2}y^3\Big)\,m^{-1/2}
+P(y,\tilde k,\tilde \jmath)m^{-1}
\end{eqnarray}
where $P$ is some polynomial depending on $f_0,f_1,f_2$ with ${\rm deg}(P)(y)=6$ the exact expression of which may
be drawn from the {\tt MAPLE}-script.
Accordingly, we define $\tilde x_{n,j,k}:=x_{n,j,k}\sigma_n$, and,
with $\varphi$ the density of ${\cal N}(0,1)$, use the abbreviations
\begin{equation}
\tilde \varphi(t) = \varphi\circ y \circ u(t),\qquad \tilde h(t,\tilde \jmath,\tilde k,n)=h (y \circ u(t),\tilde \jmath,\tilde k,n)
\end{equation}
We also introduce the integration domains
\begin{equation}
A_{n,j,k}=\Big\{t\in\R \,\Big|\,|\Tfrac{m-j}{2m-k}-F(t)|\leq k_2\sqrt{\Tfrac{\log(n)}{n}\,}\,\Big\},\qquad
\tilde A_{n,j,k}=\Big\{|t|\leq k_2\sqrt{\Tfrac{\log(n)}{n}}(1+\Lo(n^0))/f_0\Big\}
\end{equation}
Finally, applying \eqref{chjkn} and \eqref{mfac1}, we derive an integration constant $c_{n,j,k}$ from $\gamma_{n,j,k}$
from \eqref{gammadef}:
\begin{equation}
  c_{n,j,k}:=2^{-\frac{5}{2}}m^{-\frac{3}{2}}\gamma_{n,j,k}=
1-
{\tilde k}/{(4m^{1/2})}+
{(16\tilde \jmath^2-16\tilde \jmath \tilde k+3\tilde k^2+12)}/{(32m)}
\end{equation}
Plugging this all together, we obtain
\begin{equation}\nonumber
\int_{A_{n,j,k}} n\, t^2g_{n,j,k}(t)\,dt=(c_{n,j,k}+\Lo({\Tfrac{1}{n}}))\,\int_{\tilde A_{n,j,k}}
2^{\frac{5}{2}} m^{3/2} \; t^2f(t)\,\tilde \varphi(t)\,\tilde h(t,\tilde \jmath,\tilde k,n)\,dt
\end{equation}
%
Substituting $t(y)=\frac{y+\tilde x_{n,j,k}}{\sigma_n}$, we get
\begin{eqnarray*}
&&\int_{A_{n,j,k}} n\, t^2g_{n,j,k}(t)\,dt=
\int c_{n,j,k} \big(1+\Tfrac{f_1}{f_0}t(y)+\Tfrac{f_2}{f_0}
t(y)^2+\Lo(n^{-1})\big)\,
\varphi(y)\,h(y,\tilde \jmath,\tilde k,n)\Tfrac{(y+ \tilde x_{n,j,k})^2}{4f_0^2}\,
\Jc_{\tilde A_{n,j,k}}\big(t(y)\big) \,dy
\end{eqnarray*}
As $\tilde x_{n,j,k}=\LO(n^0)$,
\begin{equation}\nonumber
\Big\{|y+\tilde x_{n,j,k}|\leq 2k_2\sqrt{\log(n)}(1+\Lo(n^0))\Big\}=\Big\{|y|\leq 2k_2\sqrt{\log(n)}(1+\Lo(n^0))\Big\}=:A^0_{n}
\end{equation}
For the aggregation of the factors we use {\tt MAPLE}, giving
\begin{equation}\label{abschreq}
\int_{A_{n,j,k}} n\, t^2g_{n,j,k}(t)\,dt=
\int_{A^0_{n}} \Bigg[{\frac {(\,y+\sqrt{2}\,\tilde \jmath\,)^2}{4{{f_0}}^{2}}}+P_{1;n,\tilde \jmath, \tilde k}(y)m^{-1/2}+
P_{2;n,\tilde \jmath, \tilde k}(y)m^{-1}+\Lo(n^{-1})\Bigg] \varphi(y)\,dy
\end{equation}
for polynomials in $y$, $P_{1;n,\tilde \jmath, \tilde k}$ and $P_{2;n,\tilde \jmath, \tilde k}$
obtained by our {\tt MAPLE}-procedure {\tt getasintegrand}, where $P_{1;n,\tilde \jmath, \tilde k}$ is defined as
\begin{eqnarray*}
\Tfrac{y^2 \tilde k(y^2-1)}{16f_0^2}+\Tfrac{\sqrt{2}\,y^3f_1(2-y^2)}{32f_0^4}
+\left ( \Tfrac{\sqrt{2}y\,\tilde k(y^2+1)}{8f_0^2}
+\Tfrac{y^2f_1(3-2y^2)}{8f_0^4}\right )\tilde \jmath
+
\left (\Tfrac {(3+y^2)\tilde k}{8\,f_0^2}+
\Tfrac{(4-5\,{y}^{2})\sqrt {2}\,f_1\,y}{16f_0^4}\right )\,{\tilde \jmath}^{2}
-\Tfrac{ f_1 y^2}{4f_0^4}\,\tilde \jmath^3-\Tfrac{f_1 \sqrt{2}\,y} {4f_0^4}\,\tilde \jmath^4
\end{eqnarray*}
and $P_{2;n,\tilde \jmath, \tilde k}$ as
\begin{eqnarray*}
\lefteqn{P_{2;n,\tilde \jmath, \tilde k}(y)=
{\Tfrac {\tilde k^2(y^6-2y^4-y^2)+y^2(7-4y^4)}{128f_0^2}}+
{\Tfrac {{f_1}\tilde k\,\sqrt {2}{y}^{3}(-2+5y^2-y^4)}{128f_0^4}}+{\Tfrac {f_2y^4(3-y^2)}{192f_0^5}}+\Tfrac {f_1^2y^6(y^2-5)}{256f_0^6}+}\\
&&\qquad+\hphantom{\big[}{
\Big({\Tfrac {\sqrt{2}\big(\tilde k^2y(9+6y^2+3y^4)+28y(3-y^4)\big)}{192\,f_0^2}}+
{\Tfrac {\tilde k\, f_1\,y^2\,(-2y^4+5y^2+3)}{32\,f_0^4}}+
\Tfrac {\sqrt{2}\,y^3f_2(12-5y^2)}{192\,f_0^5}}+
{\Tfrac {\sqrt{2}\,y^5f_1^2(3y^2-13)}{128\,f_0^6}}
\Big )\,\tilde \jmath
+\\
&&\qquad+\hphantom{\big[}
\Big(
\,{\Tfrac {\tilde k^2(45+18y^2+3y^4)-100y^4-24y^2+84}{192f_0^2}}+
{\Tfrac {{f_1}\,\tilde k\,\sqrt {2}y(12-y^2-5y^4)}{64f_0^4}}+
{\Tfrac {y^2f_2(9-5y^2)}{48f_0^5}}
+{\Tfrac {{{f_1}}^{2}{y}^{2}(13y^4-41y^2-12)}{128f_0^6}}\Big )
\,{\tilde \jmath}^{2}
+\\
&&\qquad+\hphantom{\big[}\Big ({\Tfrac {\sqrt{2}y(3-5y^2)}{12f_0^2}}-{\Tfrac {\tilde k{f_1}y^2\,({y}^{2}+3)}{16f_0^4}}+
{\Tfrac {f_2\sqrt{2}y(10-9y^2)}{96f_0^5}}+{\Tfrac {f_1^2\sqrt{2}y(3y^4-5y^2-4)}{32f_0^6}}
\Big )\,{\tilde \jmath}^{3}+
\Big (\Tfrac{1-y^2}{4{f_0}^2}+{\Tfrac {{f_2}(1-3y^2)}{48\,{{f_0}}^{5}}}
+{\Tfrac {{f_1}^2\,(2y^4-1)}{32\,{{f_0}}^{6}}}\Big )\,{\tilde \jmath}^{4}
\end{eqnarray*}
By the restriction in $A_n^0$, we obtain that
$|y|=\LO(\sqrt{\log(n)})$, while ${\rm deg}(P_{1;\cdot};y)=5$ and ${\rm deg}(P_{2;\cdot};y)=8$.
Hence, the integrand is apparently of form
$
{(y+\sqrt{2}\,\tilde \jmath)^2}/{(4{{f_0}}^{2})}+\LO(\sqrt{\log(n)^5/n}\,)$,
and thus, eventually in $n$,  is maximized---up to $\LO(\sqrt{\log(n)^5/n}\,)$---for $|\tilde \jmath|$
maximal, i.e.\ $|\tilde \jmath|=\tilde k/2$.
Even more so, if $f_1=0$, the $m^{-1/2}$--term, too, is maximized for $|\tilde \jmath|=\tilde k/2$. As the highest
power in $P_{2;\cdot}$ occurring to $y$ in a $\tilde \jmath$--term without $f_1$ is $4$,
the integrand is maximized up to $\LO((\log(n)^4/n))$ for $|\tilde \jmath|=\tilde k/2$.\\
Condition $|\tilde \jmath|=\tilde k/2$ is equivalent to $j_k(t)\equiv k$ or $j_k(t)\equiv0$.
But this is the case---up to $\Lo(n^{-1})$---if condition \eqref{contbed1} or \eqref{contbed2} is in force, as then
up to mass of order $\Lo(n^{-1})$ the contamination is either concentrated left of $-\Tfrac{k_2}{f_0}\sqrt{\log(n)/n}$ or
right of $\Tfrac{k_2}{f_0}\sqrt{\log(n)/n}$ for any sample with no more than $k_1r\sqrt{n}$ contaminations.
With respect to \eqref{condo}, this suffices to obtain that (II) is $\Lo(n^{-1})$.\\
Later, after having integrated out $y$, we will see that if $f_1=0$, the approximation up to order $n^{-1}$ is identical
for $j_k(t)\equiv k$ and $j_k(t)\equiv 0$, whereas if $f_1>0$ it pays off for nature to contaminate by positive values and,
correspondingly, by negative values if $f_1<0$. We consider $j_k(t)\equiv k$ here.\\ 
Up to $\Lo(n^{-1})$,  $\int_{A_{n,k,k}} n\, t^2g_{n,k,k}(t)\,dt$  is
\begin{eqnarray}
\int_{A_n^0} \Big[\;\Tfrac{1}{4f_0^2}(y^2+\tilde k^2/2)+
Q_{1;n,\tilde k}(y) m^{-1/2} + Q_{2;n,\tilde k}(y)m^{-1}+
\tilde g(n,k,y)\Big] \varphi(y)\,dy \label{yeq}
\end{eqnarray}
with some skew-symmetric polynomial $\tilde g$  in $y$ of degree $5$ that 
is uniformly bounded in $n$ on $A_n^0$,
and for some even-symmetric polynomials $Q_{1;n,k}(y)$ and $Q_{2;n,k}(y)$; we only present the  definition of $Q_{1;n,k}(y)$ below;
for $Q_{2;n,k}(y)$, we refer the reader to the corresponding {\tt MAPLE}-procedure {\tt getasrisk}.
\begin{eqnarray*}
Q_{1;n,\tilde k}(y)&=&\Tfrac{3\tilde k^3}{32f_0^2}+\left(\Tfrac{f_1( \tilde k^3-3\tilde k )}{32f_0^4}+
\Tfrac{\tilde k^3-2\tilde k}{32f_0^2}\right)y^2+
\left(\Tfrac{\tilde k}{16f_0^2}+\Tfrac{f_1\tilde k}{8f_0^4}\right)y^4
\end{eqnarray*}
Using Lemma~\ref{lemnormlog} we see that we may drop the restriction
$|y|\leq 2k_2\sqrt{\log(n)}$ and integrating $y$ out, up to $\Lo(n^{-1})$,  we get that $\int_{A_{n,k,k}} n\, t^2g_{n,k,k}(t)\,dt$ is
\begin{eqnarray*}
&&\Tfrac{1+\tilde k^2/2}{4f_0^2}+\Big[(\Tfrac{1}{8f_0^2}-\Tfrac{3f_1}{16f_0^4})\,\tilde k+(\Tfrac{1}{8f_0^2}+\Tfrac{f_1}{32 f_0^4})\,\tilde k^3
 \Big]m^{-1/2}+\Big[(\Tfrac{-1}{4f_0^2}-\Tfrac{f_2}{32f_0^5}+\Tfrac{15f_1^2}{128f_0^6})+\\
&&\quad+(\Tfrac{-3}{16f_0^2}+\Tfrac{3f_1}{16f_0^4}-\Tfrac{f_2}{32f_0^5}+\Tfrac{15f_1^2}{128f_0^6})\,\tilde k^2+
(\Tfrac{3}{32f_0^2}-\Tfrac{f_2}{384f_0^5}+\Tfrac{3f_1}{64f_0^4}-\Tfrac{5f_1^2}{512f_0^6})\,\tilde k^4\Big]m^{-1}
\end{eqnarray*}
Corollary~\ref{corewk} gives that we may ignore the fact that $k$ is restricted to $k\leq k_1r\sqrt{n}$ and so with Lemma~\ref{binmomlem},
we may simply integrate out $k$. After substituting $n=2m+1$ we thus indeed get
\begin{eqnarray}
&&  \sup_{G^{(n)}}\,n\,[{\rm MSE}({\rm Med}_n,G^{(n)})]=\frac{1}{4 f_0^2}\Bigg\{
(1+r^2)+\Tfrac{r}{\sqrt{n}}\Big({\Ss 2(1+r^2)}+\Tfrac{f_1(r^2+3)}{2f_0^2}\Big)+\nonumber\\
&&\qquad +\Tfrac{1}{n}\Big({\Ss \big(3r^4+3r^2-2\big)}+\Tfrac{3r^2f_1(3+r^2)}{2f_0^2}-\Tfrac{f_2(r^4+6r^2+3)}{12f_0^3}+\Tfrac{5 f_1^2(r^4+6r^2+3)}{16f_0^4}\Big)\Bigg\}
+\Lo(n^{-1})\nonumber
\end{eqnarray}
Considering both cases $j_k(t)\equiv k$ and $j_k(t)\equiv 0$ simultaneously, we get \eqref{Mall} with \eqref{Malla1} and \eqref{Malla2}.\qed
%
\subsection{Proof of Proposition~\ref{medeventhm}---pure quantiles and randomization}
The proof for the pure quantiles is just as in  the odd case and thus skipped.
%
We only draw the attention to the different behaviour of the $1/\sqrt{n}$-correction
term for positive and negative contamination
which explains \eqref{s-def} in this case. 
For the bias corrected version $M''_n$, 
with the same techniques as in the proof of Theorem~\ref{medthm}, we calculate the bias
of $\sqrt{n}\,X_{[(m+1):n]}$ under $F$.
This gives $
 \sqrt{n}\,\Big|{\rm Bias}(X_{[m+1:n]},F^n)\Big|=B_{n,1}+B_{n,2}
$
for
\begin{equation}
  B_{n}=B_{n,1}+B_{n,2},\qquad B_{n,1}=\frac{1}{2f_0\sqrt{n}}, \qquad |B_{n,2}|=\frac{|f_1|}{8f_0^3\sqrt{n}}
\end{equation}
The same terms but with different signs are obtained for $\sqrt{n}\,\big|{\rm Bias}(X_{[m:n]},F^{n})\big|$.
We only consider $B_{n,1}$ here, which arises no matter if we have symmetry or not and gives the bias corrected version $M''_n$
with the $a_{i,j}$ terms as in Proposition~\ref{aijterms}.
\begin{Rem}\rm\small \small
  We note that in all variants of the sample median up to now a minor deterministic improvement is possible if $f_1\not=0$,
when we consider the bias-corrected estimators
\begin{equation}
M_n^\flat:=M_n-\frac{1}{\sqrt{n}}B_{n,2}=M_n+\frac{f_1}{8f_0^3n}
\end{equation}
Except for the pure quantiles for even $n$, this renders all variants bias--free up to $\Lo(n^{-1})$ in the ideal model.
\end{Rem}

\subsection{Proof of Proposition~\ref{medeventhm}---the midpoint-estimator}
For the midpoint--estimator $\bar M_n$, we need the common law of the pure quantile estimators $X_{[m:n]}$
and $X_{[(m+1):n]}$. So more generally, we start with the common law of
$(Y,Z):=(X_{[\nu_1:n]},X_{[\nu_2:n]})$ for $1\leq \nu_1<\nu_2\leq n$, $X_i\iid F$, $i=1,\ldots,n$ and $F(dx)=f(x)\,dx$,
see \citet[pp.~9--10]{Dav:70},
and in our case ($n\hat=2m$, $\nu_1\hat=m$, $\nu_2\hat=m+1$) 
leads us to  
the density of the midpoint estimator $\bar M_{2m}=(Y+Z)/2$
\begin{equation}
g_{n}(t)=(2m)^2\, {2m-1 \choose m} \int_t^{\infty}
\,\big[F(2t-u)\,\big(1-F(u)\big)\big]^{(m-1)}\,f(u)\,f(2t-u)\,du\label{denseevenid}
\end{equation}
This gives for $(2m)\,{\rm MSE}(\bar M_{2m},F)$, after substituting $s=2t-u$, and using Fubini
\begin{eqnarray}
\!&&(2m)\,{\rm MSE}(\bar M_{2m},F)=\nonumber\\
\!&&2m^2\, {2m-1 \choose m}  \int\int_{-\infty}^{u}
\! \frac{(u+s)^2}{4} \,\big[F(s)\,\big(1-F(u)\big)\big]^{(m-1)}\,f(u)\,f(s)\,ds\,du
\end{eqnarray}
We skip the argument showing how to choose a risk maximizing contamination.  
In the {\tt MAPLE} script, however, we have detailed out a corresponding argument for
$j(t)$ the number of contaminated observations larger than $t$. 
Without loss of generality, we work with the case of contamination to the right.
Analogue arguments as in the preceding cases show that given we have $k$ observations
contaminated to $\infty$, we get as  expression for the (conditional) ${\rm MSE}_{|K=k}$:
\begin{eqnarray}
(2m)\,{\rm MSE}_{|K=k}&=&{(2m)(2m-k)}\,
{\Ss 2m-1-k \choose \Ss m-k} \int F(u)^{(m-k)}\big(1-F(u)\big)^{(m-1)}\,f(u) \times \nonumber\\
&&\;\times \int_{-\infty}^{u} \!\Tfrac{(m-k)(u+s)^2}{4F(u)} \,\big(1-\frac{F(u)-F(s)}{F(u)}\big)^{(m-1-k)}\,f(s)\,ds\,du \label{denseeven}
\end{eqnarray}
which we have written in a way to be able parallel the preceding subsections.
Denote the value of the inner integral by $H_k(u)$ and
\begin{equation}
  \Delta(s,u):=(F(u)-F(s))/F(u)
\end{equation}
In the inner integral, $0\leq \Delta(s,u)\leq 1$, and  for $\Delta(s,u)>\alpha>0$,  $H_k(u)$
 will decay exponentially while being dominated, so if we introduce
\begin{equation}
  \delta(u):=\sup\big\{ s <u\, \big|\, F(s)\leq (1-\alpha)\,F(u)\big\}
\end{equation}
in fact we may restrict the inner integral to
\begin{equation}
H_k(u)=\Lo(m^{-1})+ \frac{m-k}{4F(U)} \int_{\delta(u)}^{u} \!(u+s)^2 \,\big(1-\Delta(s,u)\big)^{(m-1-k)}\,f(s)\,ds \label{Hueq}
\end{equation}
But then expanding $\log(1-\Delta(s,u))$, and in order to get the right order for the expansion substituting
$u=\tilde u/\sqrt{m}$, $s=\tilde s/\sqrt{m}$---according to case (I), i.e.; $|u|\leq {\rm const}\,\sqrt{\log(m)/m}$.
Thus, for polynomials $\bar Q_i$ in $\tilde s,\tilde u$ defined in analogy to the $Q_i$ in the to odd-sample case and with may
be looked up in the {\tt MAPLE} script,
$$\Delta(s,u)=
2\,{\Tfrac {f_0\,\left (\tilde s- \tilde u\right )}{\sqrt {m}}}+ \Tfrac{\bar Q_{0}(\tilde s,\tilde u)}{m\,}+
\Tfrac{\bar Q_{1}(\tilde s,\tilde u)}{m^{3/2}\,}+\Tfrac{\bar Q_2(\tilde s,\tilde u)}{m^2}+\LO(\big(\Tfrac{\log(n)}{n}\big)^{5/2})
$$
Hence  we get
\begin{equation}
{(m-k-1)}\log(1-\Delta(s,u)) -{\sqrt{m}}({\Ss 2\,f_0\,\left (\tilde s- \tilde u\right )})=
{\tt logH21}(s,u)+\Lo(\sqrt{\log(n)/n}\,) \label{logH21}
\end{equation}
for some function ${\tt logH21}$, the exact expression of which may be produced in the corresponding {\tt MAPLE} script.
Thus, denoting the term $\exp({2\sqrt{m}\,f_0\,(\tilde s- \tilde u)})$ by $H_{\,;1}(s,u)$, we get
$$
(1-\Delta(s,u))^{(m-k-1)}=H_{\,;1}(s,u) \exp({\tt logH21})
\times(1+\Lo(\sqrt{\log(n)/n})),
$$
Now, if we write $H_{\,;2,2}(s,u)$ for $(s+u)^2\,f(s)$, and
$H_{k;2,2}(s,u)$ for $\exp({\tt logH21})$, and if we introduce
$H_{k;2}(s,u):=H_{\,;2,1}(s,u)H_{k;2,2}(s,u)$, we get
$$
4 F(u)\,H_k(u)=\Lo(n^{-2})+\int_{\delta(u)}^u H_{\,;1}(s,u)\,H_{k;2}(s,u)\,ds
$$
The next step is to integrate out $s$ where we may drop the lower restriction again due to the exponential decay
far out for large values of $s$. After three times of integration by parts we come up with
\begin{equation}
4 F(u)\,H_k(u)=\Lo(n^{-2})+
\sum_{i=0}^2 \frac{(-1)^i}{(2\sqrt{m}\,f_0)^{(i+1)}}H_{\,;1}(s,u) \frac{\partial^i}{\partial s^i}H_{k;2}(s,u)\,\Big|_{-\infty}^u \label{Husum}
\end{equation}
that is we may restrict ourselves to these terms for our purposes. These differentiations can be done by the {\tt MAPLE}
command {\tt diff}.
%
Noting that essentially $t=\LO(\sqrt{\log(n)/n})$,
we hence get for the inner integral $H$
$$
H_k(t)=t^2+\Tfrac{1}{n}[(\Tfrac{f_1}{2f_0}-1)t^2-\Tfrac{1}{2f_0}t ]-\Tfrac{1}{\sqrt{n^3}}\Tfrac{k}{2f_0} t +
\Tfrac{1}{n^2}\Tfrac{1}{8f_0^2}+\Lo(n^{-2})
$$
So in formula~\eqref{nMSE} (with $j\equiv k$) we replace $t^2$ by $H_k(t)$ and arrive at
\begin{equation}\label{n2MSE}
  \sup_{G^{(n)}} n\,{\rm MSE}(\bar M_n,G^{(n)})=n\, \sum_{k=0}^m   \int H_k(t)\,  g_{n,k,k}(t)\,dt\;\,
  P(K=k) +\Lo(n^{-1})
\end{equation}
Proceeding now just as in the preceding subsections, we obtain the assertion. 
\subsection{Proof of Proposition~\ref{neccond}}\label{neccondp}
For $t>\sqrt{\log(n)/n}/(2f_0)$, let
\begin{equation}
A_{k,t}:=\Big\{\sum_{i}U_i\big(2\Jc(X_i\leq t\,)-1) \leq k-1\Big\}
\end{equation}
Hence if $t>\sqrt{\log(n)/n}/(2f_0)$, by \eqref{condnec},
for all $k>(1-\delta)r\sqrt{n}$,
\begin{equation}\label{restri}
\Pr(A_{k,t}\,\Big|\,K=k)\ge p_0
\end{equation}
Now we proceed as in the proof to Theorem~\ref{medthm}.
But $t>\sqrt{\log(n)/n}/(2f_0) \iff y> \sqrt{\log n }$ in \eqref{yeq}. Hence on the event $A_{k,t}$
for $y\in [ \sqrt{\log n };k_2\sqrt{\log n\,}\,)$, we get the bound  $\tilde \jmath(t)\leq (k-1)/\sqrt{n}$,
while for $y\in (-k_2\sqrt{\log(n)};\sqrt{\log n\,}\,)$ respectively on ${}^cA_{k,t}$, we  bound $\tilde \jmath(t)$ by $k/\sqrt{n}$.
Integrating out these two $y$-domains separately, we obtain
\begin{eqnarray*}
&&  n\,\Big({\rm MSE}[{\rm Med}_n,G_0^{(n)}\,\big|K=k\,]-{\rm MSE}[{\rm Med}_n,G_\flat^{(n)}\,\big|K=k\,]\Big)\geq \\
&\geq&\frac{p_0}{2f_0} \int_{\sqrt{\log n\, }}^{k_2 \sqrt{\log n\, }}\Big(s /\sqrt{n}+\tilde k/\sqrt{2n}-1/(2f_0n)\Big)\,\varphi(s)\,ds+\Lo(n^{-1})
\end{eqnarray*}
But for $0<a_1<a_2<\infty$,
$
\varphi(a_1)/a_2-\varphi(a_2)/a_2\le \int_{a_1}^{a_2} \,\varphi(s)\,ds
$,
so that with $a_1=2\sqrt{\log n }$, $a_2=k_2 \sqrt{\log n\, }$, and as $\varphi(a_2)=\Lo(n^{-1})$,
\begin{eqnarray*}
  n\,\Big({\rm MSE}[{\rm Med}_n,G_0^{(n)}\,\big|K=k\,]-{\rm MSE}[{\rm Med}_n,G_\flat^{(n)}\,\big|K=k\,]\Big)
\geq\frac{p_0}{2\sqrt{2\pi}\, n f_0}+\Lo(n^{-1})
\end{eqnarray*}
By Lemma~\ref{binlem}, the restriction to $(1-\delta)r\sqrt{n}<K<k_1r\sqrt{n}$ may be dropped, and we obtain the assertion.
The case of an even sample size is proved similarly.
\hfill\qed
\section*{Extra Material}
On [site of the journal] we have additional supplementary material for this article:
This comprises extended tables, details to points alluded to in remarks, but in particular
a {\tt MAPLE} script, referred to in the proofs. 
\begin{figure}[p]
\centerline{\includegraphics[width=13cm]{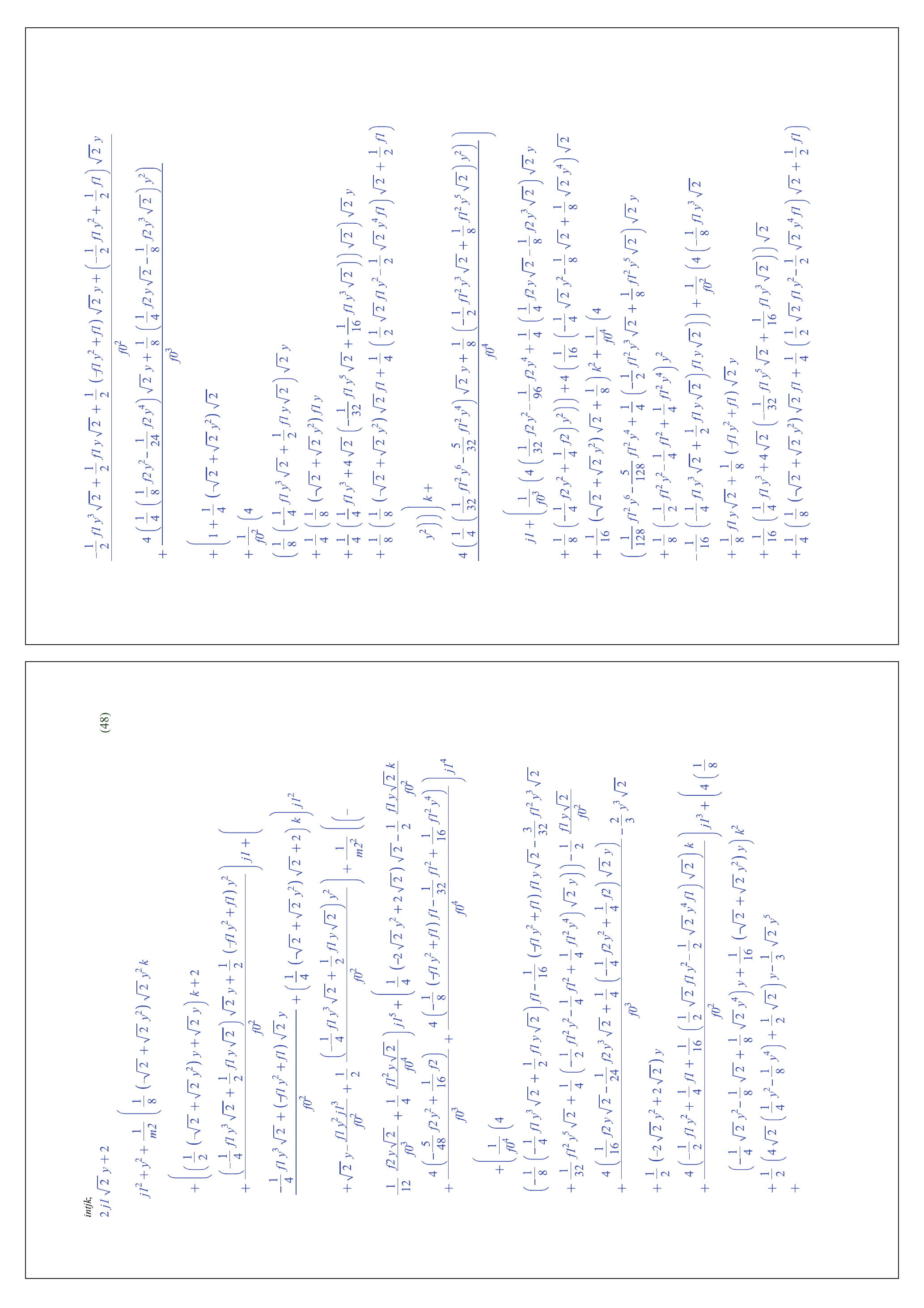}}
\caption{\label{abschrFig}A horrifying example: (The first two pages of) the expression for \eqref{abschreq} got from {\tt MAPLE};
of course, after integration terms get much more treatable, as visible  in Theorem~\ref{medthm}.
}
\end{figure}

\section*{Acknowledgement}
Thanks are due to J. Picek for drawing the author's attention to \citet{Jur:82}.
\medskip

\medskip
\hrulefill\hspace*{6cm}\\[2ex]
Web-page to this article:\\
\href{http://www.mathematik.uni-kl.de/~ruckdesc/}%
{{\footnotesize \url{http://www.mathematik.uni-kl.de/~ruckdesc/}}}

\end{document}